\declaretheorem[numberwithin=section]{theorem}  % Theorem environments using amsthm + thmtools
\declaretheorem[sibling=theorem]{corollary}
\declaretheorem[sibling=theorem]{lemma}
\declaretheorem[sibling=theorem]{proposition}
\numberwithin{equation}{section}
\theoremstyle{definition} %How to get the example counter to be consistent with the theorem counter?
\newtheorem{example}{Example}[section]
\DeclareMathOperator\coker{coker}
\DeclareMathOperator\im{im}
\DeclareMathOperator\Hom{Hom}
\DeclareMathOperator\sHom{\mathcal{H}\mathit{om}}
\DeclareMathOperator\Ext{Ext}
\DeclareMathOperator\spec{Spec}
\DeclareMathOperator\supp{Supp}
\newcommand\dualab\hat
\newcommand\sh\mathscr
\newcommand\bb\mathbb
\DeclareMathOperator{\DR}{DR}
\DeclareMathOperator{\gr}{gr}
\renewcommand{\le}{\leqslant}
\renewcommand{\ge}{\geqslant}
\renewcommand{\subset}{\subseteq}
\DeclareMathOperator\End{End}
\DeclareMathOperator\Sym{Sym}
\newcounter{proofstep}
\xpretocmd{\proof}{\setcounter{proofstep}{0}}{}{}
\newcommand{\proofstep}[1]{%
  \par
  \addvspace{\medskipamount}%
  \stepcounter{proofstep}%
  \noindent\emph{Step \theproofstep: #1}\par\nobreak\smallskip
  \@afterheading
}
\author{Daniel Brogan}
\date{}
\title[Nori's Connectivity Theorem from the Perspective of $D$-Modules]{Nori's Connectivity Theorem from the Perspective of $D$-Modules}
\begin{document}

\maketitle
\begin{abstract}
    Given a very ample line bundle on a smooth projective variety, the variation of Hodge structure associated to the universal family of hyperplane sections can be thought of as a $D$-module with action generated by the Gauss-Manin connection. The decomposition theorem for a projective morphism and results similar to the Lefschetz hyperplane theorem tell us that the only nontrivial part of this VHS occurs in the middle degree corresponding to the vanishing cohomology of the hyperplane sections. We use Nori's connectivity theorem to give an explicit description of the cohomology sheaves of the de Rham complex of this $D$-module in terms of the vanishing cohomology of the original variety.
\end{abstract}

\section{Introduction}\label{Intro}
The goal of this paper is to understand the main result of \cite{Nor93} in a different light, namely we will interpret this theorem in the setting of $D$-modules. If $\mathcal M$ denotes the mixed Hodge module associated to a variation of Hodge structure of hyperplane sections of a smooth projective variety, then using Nori's theorem and the decomposition theorem for mixed Hodge modules we are able to get an explicit description of the cohomology sheaves of the graded pieces of $\DR(\mathcal M)$ in terms of the cohomology of the original variety.

%Outline of the paper?
\iffalse
The rest of the Introduction will recall the main result of Nori's paper and fix some notation. Section \ref{NoriPf} is devoted to the proof of Nori's main theorem Proposition \ref{prop:N2}. In Section \ref{DMod}, we will cover generalities about $D$-modules and state useful theorems such as the Riemann-Hilbert correspondence. This will allow us in Section \ref{MainThm} to reinterpret Proposition \ref{prop:N2} in the language of $D$-modules.\\
\fi

\indent For the entirety of this paper let $X$ be a smooth projective variety of dimension $n+1$ and $\mathcal O_X(1)$ a very ample line bundle on $X.$ Set $P=\mathbb{P}\left(H^0(X,\mathcal O_X(m))\right)$ where $m\gg 0;$ we take this to be the projective space of lines in $H^0(X,\mathcal O_X(m).$ Let $d=\dim P$ and let $\ell:\mathscr X\hookrightarrow P\times X$ be the universal hypersurface, i.e.
\[
	\mathscr X = \left\{([\sigma],x)\in P\times X \mid \sigma(x)=0\right\}.
\]
Let $pr_P$ and $pr_X$ denote the projections from $P\times X$ to $P$ and $X$ respectively. Write $\pi_P=pr_P|_{\mathscr X}$ and $\pi_X=pr_X|_{\mathscr X}.$ Using $\pi_P,$ a point $p\in P$ corresponds to a hypersurface $X_p=\pi_X(\pi_P^{-1}(p))\subseteq X.$ We will denote by $P^\mathrm{sm}$ the set of $p\in P$ for which $X_p$ is smooth and $\mathscr X^\mathrm{sm}$ the union of these $X_p.$ The main theorem that Nori proved is the following:
\begin{theorem}
	Suppose that $m\gg 0$ is sufficiently large Then the restriction map $\ell^*: H^k(P^\mathrm{sm}\times X,\mathbb Q)\to H^k(\mathscr X^\mathrm{sm},\mathbb Q)$ is an isomorphism for $k<2n$ and is injective for $k=2n.$
	\label{thm:N}
\end{theorem}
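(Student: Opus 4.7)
The plan is to compare the two spaces via the Leray spectral sequences for the projections $pr_P : P^{\mathrm{sm}} \times X \to P^{\mathrm{sm}}$ and $\pi_P : \mathscr{X}^{\mathrm{sm}} \to P^{\mathrm{sm}}$, connected by $\ell^*$. Since $pr_P$ is a trivial bundle, K\"unneth gives $H^{\ast}(P^{\mathrm{sm}} \times X, \mathbb{Q}) \cong H^{\ast}(P^{\mathrm{sm}}, \mathbb{Q}) \otimes H^{\ast}(X, \mathbb{Q})$. By the very definition of $P^{\mathrm{sm}}$, the map $\pi_P$ is smooth and projective, so $R^q \pi_{P*} \mathbb{Q}$ is a local system on $P^{\mathrm{sm}}$ with stalk $H^q(X_p, \mathbb{Q})$ and, by Deligne, its Leray spectral sequence degenerates at $E_2$. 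The map $\ell^{\ast}$ induces a morphism of spectral sequences whose $E_2$ component is obtained by sheafifying the fiberwise restriction maps $H^q(X, \mathbb{Q}) \to H^q(X_p, \mathbb{Q})$.

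Next I would invoke the Lefschetz hyperplane theorem for the very ample bundle $\mathcal{O}_X(m)$: for each smooth $X_p$, the restriction $H^q(X) \to H^q(X_p)$ is an isomorphism for $q < n$ and injective for $q = n$ with cokernel the vanishing cohomology $H^n(X_p)_{\mathrm{van}}$; the degrees $q > n$ are then governed by Poincar\'e duality in the fibers together with hard Lefschetz. Sheafifying, these identifications show that the failure of $\ell^{\ast}$ to be an isomorphism in total degree $k$ is controlled by $H^{k-n}(P^{\mathrm{sm}}, \mathcal{H}^n_{\mathrm{van}})$, where $\mathcal{H}^n_{\mathrm{van}}$ denotes the local system of vanishing cohomology on $P^{\mathrm{sm}}$. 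Theorem \ref{thm:N} therefore reduces to proving that $H^p(P^{\mathrm{sm}}, \mathcal{H}^n_{\mathrm{van}}) = 0$ for $p < n$, with the boundary map in degree $p = n$ being injective.

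The main obstacle is precisely this vanishing statement, which is the real content of Nori's theorem. Two ingredients are required: first, for $m \gg 0$ the discriminant $\Delta = P \setminus P^{\mathrm{sm}}$, together with its iterated strata of worse singularities, has codimension in $P$ that grows with $m$, so that local-system cohomology on $P^{\mathrm{sm}}$ is forced to vanish in a long range of degrees; second, by the Picard--Lefschetz formula combined with the transitivity of monodromy on vanishing cycles, $\mathcal{H}^n_{\mathrm{van}}$ has no monodromy invariants, which kills $H^0$. Upgrading this to the vanishing of every $H^p$ with $p < n$ is not formal; it is where Nori's argument is most delicate, typically proceeding either by induction on $n$ or by replacing the single-section family $\mathscr{X}$ with an auxiliary multi-section variant whose discriminant complement is sufficiently more connected, and it is this step that essentially uses the hypothesis $m \gg 0$.
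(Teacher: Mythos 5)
Your reduction is essentially correct as a reformulation of Nori's theorem, but the proposal stops precisely at the step that constitutes the actual proof, and it also differs fundamentally in strategy from what Nori (and this paper) do.

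On the reduction: using K\"unneth, Deligne degeneration of Leray for the smooth projective family $\pi_P$, and the fiberwise Lefschetz decomposition of $R^q\pi_{P*}\mathbb{Q}$, the failure of $\ell^*$ in total degree $k$ does reduce (up to some bookkeeping you elide regarding degrees $q>n$, where the comparison is to $H^{q+2}(X)(1)$ via Gysin and Hard Lefschetz, not to $H^q(X)$) to the vanishing of $H^p(P^{\mathrm{sm}},\mathcal{H}^n_{\mathrm{van}})$ for $p<n$ together with injectivity of the relevant edge map at $p=n$. This is a well-known equivalent statement. However, you then write that proving this vanishing ``is not formal; it is where Nori's argument is most delicate,'' and you do not supply the argument. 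That is the theorem. Your two suggested ingredients do not close the gap: the discriminant $\Delta$ is a hypersurface for every $m$, so its codimension does not grow with $m$ (only the deeper strata do, and turning that into vanishing of $H^p(P^{\mathrm{sm}},\mathcal{H}^n_{\mathrm{van}})$ for all $p<n$ is exactly what requires Nori's machinery); and monodromy irreducibility via Picard--Lefschetz only handles $H^0$.

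On strategy: the paper's route is not topological at all. It first proves the purely coherent statement of Proposition \ref{prop:N2} (vanishing of $R^a(pr_P)_*\Omega^b_{P\times X,\mathscr{X}}$ for $a\le n$, $a+b\le n+c$), obtained by a chain of reductions that ultimately rests on Serre vanishing for auxiliary coherent sheaves on self-products $X^{\gamma+1}$; this is where $m\gg 0$ enters, not through any estimate on the discriminant. It then applies the mixed-Hodge-theoretic Proposition \ref{prop:N4}, which upgrades the coherent isomorphisms on $R^a(p_S)_*\Omega^b$ to an isomorphism on $\mathbb{Q}$-cohomology by realizing $H^k(U,V;\mathbb{C})$ as hypercohomology of the relative de Rham complex $\Omega^\bullet_{U,V}$, comparing the Deligne Hodge filtration $F^\bullet$ to the stupid filtration $G^\bullet$, and showing $G^p\mathbb{H}^{n+p}=0$ for $p\le n$. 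So whereas you try to control local-system cohomology on $P^{\mathrm{sm}}$ directly, Nori instead proves a sheaf-theoretic statement on all of $P$ (or rather on the affine model $S$) and converts it into a topological one using Hodge theory. If you want to pursue your route, you would need to actually establish the vanishing $H^p(P^{\mathrm{sm}},\mathcal{H}^n_{\mathrm{van}})=0$ for $p<n$, which in the literature is either deduced from Nori's coherent statement or proved by a comparably nontrivial argument; as written, that step is missing.
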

This echos the famous Lefschetz hyperplane theorem, which says that for general $p\in P,$ the restriction map $H^k(X,\mathbb Z)\to H^k(X_p,\mathbb Z)$ is an isomorphism for $k<n$ and injective for $k=n.$ The primary difference between Nori's theorem and the Lefschetz theorem is the need to consider sufficiently ample hypersurfaces. The ``$m\gg 0$" condition appearing here comes from applying Serre's vanishing theorem for certain auxillary coherent sheaves in the proof. Because of this, the size of $m$ necessary to get the result of the proposition is inexplicit. The main ingredient Nori used for proving Theorem \ref{thm:N} is the following proposition.

\begin{proposition}
	Fix a natural number $c$ and suppose that $m\gg 0$ is sufficiently large. Then for $a<n$ and $a+b<n+c$ the restriction map $$\ell^*:R^a (pr_P)_*\Omega^b_{P\times X}\to R^a (\pi_P)_*\Omega^b_{\mathscr X}$$ is an isomorphism; for $a\le n$ and $a+b\le n+c$ the map is injective.
\label{prop:N2}
\end{proposition}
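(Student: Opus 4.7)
The plan is to realize $\ell^*$ as part of a long exact sequence coming from a short exact sequence of sheaves on $P \times X$. Since $\mathscr X$ is a smooth divisor, the composition $\Omega^b_{P \times X} \twoheadrightarrow \ell_*\ell^*\Omega^b_{P \times X} \twoheadrightarrow \ell_*\Omega^b_{\mathscr X}$ is surjective with kernel $K^b := \Omega^b_{P \times X}(\log \mathscr X)(-\mathscr X)$. Applying $R(pr_P)_*$ and inspecting the resulting long exact sequence, the iso statement for $\ell^*: R^a(pr_P)_*\Omega^b_{P \times X} \to R^a(\pi_P)_*\Omega^b_{\mathscr X}$ reduces to the vanishing of $R^a(pr_P)_* K^b$ in degrees $a$ and $a+1$, while the injectivity statement reduces to vanishing of the map $R^a(pr_P)_* K^b \to R^a(pr_P)_*\Omega^b_{P \times X}$.

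To analyse $K^b$ I would use the twisted residue exact sequence
\begin{equation*}
0 \to \Omega^b_{P \times X}(-\mathscr X) \to K^b \to \ell_*\bigl(\Omega^{b-1}_{\mathscr X} \otimes \mathcal N^*\bigr) \to 0,
\end{equation*}
where $\mathcal N^* = \mathcal O_{P \times X}(-\mathscr X)|_{\mathscr X}$ is the conormal bundle. The first term is handled directly: the Künneth decomposition $\Omega^b_{P \times X} = \bigoplus_{i+j=b} pr_P^*\Omega^i_P \otimes pr_X^*\Omega^j_X$ combined with $\mathcal O_{P \times X}(-\mathscr X) = pr_P^*\mathcal O_P(-1) \otimes pr_X^*\mathcal O_X(-m)$ and flat base change present $R^a(pr_P)_* \Omega^b_{P \times X}(-\mathscr X)$ as a direct sum of sheaves of the form $\Omega^i_P(-1) \otimes H^a(X, \Omega^j_X(-m))$ with $i+j=b$. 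Each factor is Serre-dual to $H^{n+1-a}(X, \Omega^{n+1-j}_X(m))$, which Serre's vanishing theorem kills for $a \le n$ once $m$ is large enough in terms of $j \le b \le n + c$; this is the source of the ``$m \gg 0$ depending on $c$'' hypothesis.

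The sensitive term is $R^a(\pi_P)_*(\Omega^{b-1}_{\mathscr X} \otimes \mathcal N^*)$, which I would handle by downward induction on $b$ via the twisted conormal sequence
\begin{equation*}
0 \to \Omega^{b-2}_{\mathscr X} \otimes (\mathcal N^*)^{\otimes 2} \to \ell^*\Omega^{b-1}_{P \times X} \otimes \mathcal N^* \to \Omega^{b-1}_{\mathscr X} \otimes \mathcal N^* \to 0.
\end{equation*}
The projection formula combined with the Koszul sequence $0 \to \mathcal O_{P \times X}(-2\mathscr X) \to \mathcal O_{P \times X}(-\mathscr X) \to \ell_*\mathcal N^* \to 0$ reduces the middle term's pushforward to the same Künneth-plus-Serre-vanishing argument applied to higher negative twists $\Omega^j_X(-km)$, and iterating down to $b = 0$ terminates at a line-bundle computation on $P \times X$ again settled by Serre vanishing. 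The main obstacle I expect is the boundary case $a = n$ of the injectivity statement, where Serre vanishing on the $n$-dimensional fibers is tight and the naive flank-vanishing in the LES breaks down (for instance, $R^n(\pi_P)_* \mathcal N^*$ is nonzero in general); resolving this corner calls for arguing directly that the specific map $R^n(pr_P)_* K^b \to R^n(pr_P)_* \Omega^b_{P \times X}$ is zero rather than that $R^n(pr_P)_* K^b$ itself vanishes, and for keeping careful track of how many Serre-vanishing instances appear so that a single choice of $m$ covers them all uniformly within the asserted range.
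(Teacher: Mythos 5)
Your reduction to the vanishing of $R^a(pr_P)_*K^b$ where $K^b = \Omega^b_{P\times X}(\log\mathscr X)(-\mathscr X) = \Omega^b_{P\times X,\mathscr X}$ agrees with the paper's first step (the paper's Proposition~\ref{prop:N2'}). After that the routes diverge completely, and your route has a genuine gap that is worse than the single corner case you flag.

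The problem is that your downward induction on $b$ via the twisted conormal sequence increases the cohomological degree by one at each step: to get $R^a(\pi_P)_*\bigl(\Omega^{b-s}_{\mathscr X}\otimes(\mathcal N^*)^{\otimes s}\bigr)=0$ you need $R^{a+1}(\pi_P)_*\bigl(\Omega^{b-s-1}_{\mathscr X}\otimes(\mathcal N^*)^{\otimes s+1}\bigr)=0$, and at each level $s$ the Koszul/projection-formula argument for the middle term $\ell^*\Omega^{b-s}_{P\times X}\otimes(\mathcal N^*)^{\otimes s}$ requires, at cohomological degree $a'$, not just $R^{a'}(pr_P)_*\Omega^{b-s}_{P\times X}(-s\mathscr X)=0$ but also injectivity of
\begin{equation*}
R^{a'+1}(pr_P)_*\Omega^{b-s}_{P\times X}\bigl(-(s+1)\mathscr X\bigr)\longrightarrow R^{a'+1}(pr_P)_*\Omega^{b-s}_{P\times X}\bigl(-s\mathscr X\bigr).
\end{equation*}
This is automatic for $a'+1\le n$ (both sides vanish by Serre vanishing applied through duality) and for $a'+1>n+1$ (dimension of fibers). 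At the unavoidable degree $a'+1=n+1$, however, $H^{n+1}(X,\Omega^q_X(-km))^\vee\cong H^0(X,\Omega^{n+1-q}_X(km))$ is nonzero and \emph{grows} with $k$. Fiberwise over $[\lambda]\in P$ the map above is dual to multiplication by $\lambda$ on global sections, which is injective; hence the original map is generically \emph{surjective} with strictly larger source rank, so it has a nonzero kernel as a sheaf map and the required injectivity fails. Since the induction reaches degree $n$ precisely when $a+b-1\ge n$, the scheme collapses for all pairs with $a+b>n$, i.e.\ for every $c\ge 1$ in the range you are asked to cover, not merely for the single boundary $a=n$. So the suggestion that the corner can be handled by showing a specific map $R^n(pr_P)_*K^b\to R^n(pr_P)_*\Omega^b_{P\times X}$ is zero underestimates the problem: many intermediate levels hit the degree-$n$ wall.

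The paper sidesteps this entirely. It first uses Lemma~\ref{lem:1} (stability of the vanishing condition under smooth surjective base change) to replace $P$ by the affine space $S=H^0(X,\mathcal O_X(m))$, so that $(A,B,S)$ with $A=S\times X$ is a triple of affine bundles over $X$. It then filters $\Omega^b_{A,B}$ by the Leray filtration for the projection $pr_X\colon A\to X$ (not for $pr_S\colon A\to S$), so the graded pieces live on $X$ itself. This trades the troublesome $a$-direction for cohomology groups $H^i(X,-)$ with $i>0$, for which Serre vanishing is applicable without a duality flip. The subsequent reduction to statements about the sheaves $\mathcal B^{\otimes\gamma}$ and $\mathcal D^\gamma=\coker(\mathcal B^{\otimes\gamma}\to\mathcal A^{\otimes\gamma})$, and the construction of the auxiliary \v{C}ech-type complex on $X^{\gamma+1}$, are what make the ``$m\gg 0$ depending on $c$'' hypothesis bite. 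None of this machinery appears in your sketch, and I don't see how to avoid it while working with the residue and conormal sequences directly over $P$.
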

\indent The sheaves appearing in Proposition \ref{prop:N2} are the cohomology of the graded pieces of the de Rham complexes of certain $D$-modules on $P,$ namely $(pr_P)_+\mathcal O_{P\times X}$ and $(\pi_P)_+\mathcal O_{\mathscr X}.$ These are the $D$-modules associated to the variations of Hodge structure given by the cohomology of $X$ and the cohomology of the hypersurfaces $X_p$ respectively. So Proposition \ref{prop:N2} can be restated to say that for certain graded pieces of the de Rham complex and certain low degrees, the restriction map 
\begin{equation*}
    \DR_P((pr_P)_+\mathcal O_{P\times X})\to\DR_P((\pi_P)_+\mathcal O_{\mathscr X})
\end{equation*}
induces an isomorphism on cohomology sheaves. Furthermore, the Lefschetz hyperplane theorem tells us the only nonconstant part of $(\pi_P)_+\mathcal O_{\mathscr X}$ is the $D$-module $\mathcal M$ corresponding to the vanishing cohomology of the $X_p.$ Explicitly writing the above restriction map using the decomposition theorem, we can find exactly which sheaves make up the cohomology of the graded pieces of $\DR_P(\mathcal M).$ We then get the main result of the paper.
\begin{proposition}
    Let $k$ and $b$ be integers such that $k<2n$ and $k-b<n.$ Then the restriction map $\ell^*:\DR_P((pr_P)_+\mathcal O_{P\times X})\to\DR_P((\pi_P)_+\mathcal O_{\mathscr X})[1]$ induces an isomorphism
    \begin{equation*}
	    \bigoplus_{j=n+1}^{k} L^{j-n-1}H_{\mathrm{prim}}^{b-k+n+1,k-b-j+n+1}(X)\otimes \Omega_P^{k-j} \cong \mathcal H^{-d-n+k}\gr_{-b}^F \DR(\mathcal M).
    \end{equation*}
    %Maybe I should reindex i=j-n-1 here...
\end{proposition}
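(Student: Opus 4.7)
The strategy is to combine Proposition~\ref{prop:N2} with the decomposition theorem for $(\pi_P)_+\mathcal{O}_{\mathscr{X}}$ (as a mixed Hodge module) and the Lefschetz hyperplane theorem, together with the Lefschetz decomposition on $H^*(X)$. The rough idea: the decomposition theorem splits $(\pi_P)_+\mathcal{O}_{\mathscr{X}}$ as $\mathcal{M}\oplus\mathcal{N}$ with $\mathcal{N}$ a sum of Hodge-twisted constant $\mathcal{O}_P$'s, Proposition~\ref{prop:N2} forces $\ell^*$ to be an isomorphism on $\gr^F\DR$-cohomology in the stated range, and the Lefschetz pieces of $H^*(X)$ in the kernel of restriction $H^*(X)\to H^*(X_p)$ account for $\mathcal{H}^{\bullet}\gr^F\DR(\mathcal{M})$.

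First I would translate to classical Hodge sheaves. Using $\DR(f_+M)\simeq Rf_*\DR(M)$ and the collapse $\gr^F_{-b}\DR(\mathcal{O}_Y)\simeq\Omega^b_Y[\dim Y-b]$ (a consequence of $\gr^F_p\mathcal{O}_Y$ being supported in $p=0$), one identifies
\begin{equation*}
\mathcal{H}^{-d-n+k}\gr^F_{-b}\DR\bigl((pr_P)_+\mathcal{O}_{P\times X}\bigr)\simeq R^{k-b}(pr_P)_*\Omega^b_{P\times X},
\end{equation*}
and similarly on the $\mathscr{X}$-side, with the $[1]$ shift absorbing $\dim(P\times X)-\dim\mathscr{X}=1$. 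Künneth then gives
\begin{equation*}
R^{k-b}(pr_P)_*\Omega^b_{P\times X}=\bigoplus_j\Omega^{k-j}_P\otimes H^{b-k+j,k-b}(X),
\end{equation*}
and the Lefschetz decomposition further splits each Hodge component as $\bigoplus_r L^r H^{p-r,q-r}_{\mathrm{prim}}(X)$.

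Next I would apply the decomposition theorem to $\pi_P$ to write $(\pi_P)_+\mathcal{O}_{\mathscr{X}}\simeq\mathcal{M}\oplus\mathcal{N}$ with $\mathcal{N}$ a direct sum of Hodge-twisted constant $\mathcal{O}_P$'s. By the Lefschetz hyperplane theorem (so $H^i(X)\cong H^i(X_p)$ for $i<n$ and $H^n(X)\hookrightarrow H^n(X_p)$) and hard Lefschetz on $X$ (identifying $H^i(X_p)\cong H^{i+2}(X)$ via Gysin for $i>n$), the contribution of $\mathcal{N}$ matches the image of restriction $\ell^*:H^*(X)\to H^*(X_p)$, while the kernel pieces---specific Lefschetz-summands of $H^j(X)$ for $j\ge n+1$---are not seen by $\mathcal{N}$. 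Proposition~\ref{prop:N2} (with $c$ chosen large enough that $k<n+c$) then asserts that $\ell^*$ is an isomorphism on $R^{k-b}$ in the stated range $k<2n$, $k-b<n$; combining with the splitting $(\pi_P)_+\mathcal{O}_{\mathscr{X}}\simeq\mathcal{M}\oplus\mathcal{N}$ and matching Hodge types, the Lefschetz-kernel summands $L^{j-n-1}H^{2n+2-j}_{\mathrm{prim}}(X)\otimes\Omega^{k-j}_P$ for $j\in[n+1,k]$ realize $\mathcal{H}^{-d-n+k}\gr^F_{-b}\DR(\mathcal{M})$, with the Hodge types $(b-k+n+1,k-b-j+n+1)$ on the primitive parts coming out of the Lefschetz bookkeeping.

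The main obstacle is precisely this bookkeeping: verifying at the filtered $D$-module level which Lefschetz summand of $H^j(X)$ is absorbed by $\mathcal{N}$ and which appears as cohomology of $\gr^F\DR(\mathcal{M})$, aligning the Lefschetz operators on $X$ with the decomposition theorem's splitting, and carefully tracking all Hodge-filtration and shift conventions through the translation $\gr^F\DR\rightsquigarrow Rf_*\Omega^{\bullet}$.
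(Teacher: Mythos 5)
Your overall strategy matches the paper's: translate via $\gr^F_{-b}\DR(\mathcal O_Y)\simeq\Omega^b_Y[\dim Y-b]$, apply the decomposition theorem to split off $\mathcal M$, identify the constant summands on both sides (K\"unneth on the $P\times X$ side, the Lefschetz hyperplane/Gysin data on the $\mathscr X$ side), and invoke Proposition~\ref{prop:N2} to force an isomorphism on the relevant cohomology sheaves. The part you flag as the ``main obstacle''---matching the Lefschetz summands with the non-canonical decomposition-theorem splitting---is exactly where the paper's one real idea lives, and it is worth making explicit rather than deferring: one computes $\Hom(A_i[-i],B_j[-j+1])=\Ext^{i-j+1}(A_i,B_j)$, which vanishes for $i<j-1$, so the restriction map is automatically block upper-triangular with respect to \emph{any} choice of splitting. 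The diagonal blocks $i=j-1$ are then forced to be the classical comparison maps: the identity for $j\le n$ (Lefschetz hyperplane theorem range) and cup product with $c_1(\mathcal O_X(m))$ for $j>n$ (Gysin/hard Lefschetz range). Since Proposition~\ref{prop:N2} makes the total map an isomorphism in the stated range, quotienting by the identity blocks shows that the remaining map $\bigoplus_{j>n}H^{b-k+j,k-b}(X)\otimes\Omega^{k-j}_P\to\mathcal H^{-d-n+k}\gr^F_{-b}\DR(\mathcal M)\oplus\bigoplus_{j>n}K^{b-k+j,k-b}(X)\otimes\Omega^{k-j}_P$, whose second component is $L$, is an isomorphism; hence the component to $\mathcal M$ is surjective and restricts to an isomorphism on $\ker L$, and the Lefschetz decomposition of $\ker L$ gives precisely the primitive pieces in the statement. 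Without the $\Ext$-vanishing step you cannot pin down the summand-by-summand form of $\ell^*$, because the decomposition is not canonical and a priori the map could mix pieces in a way that would break the Hodge-type matching.
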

In particular, this isomorphism holds over the smooth locus $P^\mathrm{sm}.$

\section{The Proof of Nori's Theorem}\label{NoriPf}
In this section we present Nori's proof of Proposition \ref{prop:N2} which can be found in section 3 of \cite{Nor93}. We will cover only the case in which $X_p$ is a hypersurface in $X$ for each $p\in P,$ however Nori treats the case of complete intersections with little additional work. We begin with a useful general statement. Given two smooth varieties $A$ and $B$ with $\ell:B\hookrightarrow A$ a closed immersion, we let $\Omega^k_{A,B}$ denote the kernel of the restriction morphism $\Omega^k_A\to\ell_*\Omega^k_B$ for each $k;$ we have an exact sequence
\[
	0\to\Omega^k_{A,B}\to\Omega_{A}^k\to\ell_*\Omega_{B}^k\to 0.
\]
Since our goal is to get an isomorphism between the final two terms after pushing forward by a smooth morphism, we should first say something about the vanishing of the first term.
\begin{lemma}
	Let $(A,B,S)$ be a triple as where $pr:A\to S$ and $\pi:B\to S$ are morphisms and $\ell:B\hookrightarrow A$ is a closed immersion. Let $g:T\to S$ be a smooth morphism. Write $A_T=A\times_S T$ and $B_T=B\times_S T.$ If the triple $(A,B,S)$ satisfies the condition
	\begin{equation}
	    R^a (pr)_*\Omega^b_{A,B} = 0 \text{ for $a\le n$ and $a+b\le n+c$}.
	    \label{eqn1:lem:1}
	\end{equation}
	for the natural numbers $n$ and $c,$ then so does the triple $(A_T,B_T,T).$ The converse is true if $g:T\to S$ is smooth and surjective.
\label{lem:1}
\end{lemma}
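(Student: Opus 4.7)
The plan is to exploit the cotangent filtration coming from the smooth base change. I would first observe that the base changes $f:A_T\to A$ and $f_B:B_T\to B$ of $g$ are smooth, so the relative cotangent sequence
\[
    0\to f^*\Omega^1_A\to\Omega^1_{A_T}\to pr_T^*\Omega^1_{T/S}\to 0
\]
is locally split. Taking exterior powers endows $\Omega^b_{A_T}$ with a finite decreasing filtration whose graded pieces are $f^*\Omega^i_A\otimes pr_T^*\Omega^{b-i}_{T/S}$ for $0\le i\le b$, and similarly for $\Omega^b_{B_T}$. Using the projection formula and flat base change (in the form $f^*\ell_*\cong(\ell_T)_*f_B^*$, which requires flatness of $f$), the restriction $\Omega^b_{A_T}\to(\ell_T)_*\Omega^b_{B_T}$ is strict with respect to these filtrations, so the kernel $\Omega^b_{A_T,B_T}$ inherits a filtration with graded pieces
\[
    f^*\Omega^i_{A,B}\otimes pr_T^*\Omega^{b-i}_{T/S}\qquad(0\le i\le b).
\]

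The next step is to apply $R^a(pr_T)_*$ to each graded piece. Combining the projection formula with flat base change for the Cartesian square relating $pr_T,f,pr,g$ yields
\[
    R^a(pr_T)_*\bigl(f^*\Omega^i_{A,B}\otimes pr_T^*\Omega^{b-i}_{T/S}\bigr)\cong g^*\bigl(R^a pr_*\Omega^i_{A,B}\bigr)\otimes \Omega^{b-i}_{T/S}.
\]
For the forward direction, assuming \eqref{eqn1:lem:1} on $(A,B,S)$ and given $a\le n$ and $a+b\le n+c$, every $0\le i\le b$ satisfies $a+i\le n+c$, so the hypothesis annihilates each graded piece. Unwinding the filtration through successive long exact sequences then forces $R^a(pr_T)_*\Omega^b_{A_T,B_T}=0$.

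For the converse, the smooth surjection $g$ makes $g^*$ faithfully flat, so it suffices to show $g^*R^a pr_*\Omega^b_{A,B}=0$. I would proceed by induction on $b$. The base case $b=0$ is immediate: $\Omega^0_{A,B}=\mathcal I_B$, $\Omega^0_{A_T,B_T}=f^*\mathcal I_B$ by flatness of $f$, and flat base change identifies $R^a(pr_T)_*\Omega^0_{A_T,B_T}$ with $g^*R^a pr_*\Omega^0_{A,B}$. For $b\ge 1$, I would consider the short exact sequence
\[
    0\to f^*\Omega^b_{A,B}\to\Omega^b_{A_T,B_T}\to Q\to 0,
\]
where $Q$ is filtered by the pieces with $i<b$. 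The inductive hypothesis applied to each such piece gives $R^{a-1}(pr_T)_*Q=0$ whenever $a\le n$ and $a+b\le n+c$ (since any $i\le b-1$ satisfies $a-1\le n$ and $a-1+i\le a+b-2\le n+c$), so the long exact sequence injects $R^a(pr_T)_*(f^*\Omega^b_{A,B})\cong g^*R^a pr_*\Omega^b_{A,B}$ into $R^a(pr_T)_*\Omega^b_{A_T,B_T}$, which vanishes by hypothesis; faithful flatness of $g$ finishes the argument. The main obstacle is this converse: the cotangent filtration does not split, so isolating $R^a pr_*\Omega^b_{A,B}$ from the combined data on $T$ requires the inductive bookkeeping above, with the slack in $a\le n$ and $a+b\le n+c$ being exactly enough to kill the relevant pushforwards of $Q$.
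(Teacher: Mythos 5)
Your argument is correct, and it takes a genuinely different route from the paper's. The paper first reduces to $S$ and $T$ affine, so that the relative cotangent sequence $0\to g^*\Omega_S^1\to\Omega_T^1\to\Omega_{T/S}^1\to 0$ actually splits; the splitting propagates to a direct-sum decomposition $\Omega_{A_T,B_T}^b\cong\bigoplus_{p+q=b}pr_A^*\Omega_{A,B}^p\otimes pr_T^*\Omega_{T/S}^q$, and after pushing forward both directions of the lemma are read off at once from the direct summands. You instead work globally with the (unsplit) decreasing filtration on $\Omega_{A_T,B_T}^b$ coming from the same cotangent sequence, which is why the forward direction is a straightforward devissage but the converse requires an extra induction on $b$ to pry the $i=b$ graded piece $f^*\Omega^b_{A,B}$ loose from the quotient $Q$. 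What you buy with your approach is avoiding the slightly awkward affine-cover reduction at the end of the paper's proof; what the paper's approach buys is that, once affine, the splitting makes both implications immediate without any induction. Both proofs make essentially the same use of smoothness/flatness of $g$ (for flat base change, the projection formula with locally free $\Omega^q_{T/S}$, and faithful flatness of $g^*$ in the converse direction), so the underlying mechanisms are parallel even though the packaging is different.
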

\begin{proof}
	First we prove the lemma in the case where $S$ and $T$ are affine and then pass to the general case. We have several morphisms induced by pulling back:
	\[
		pr_A:A_T\to A,\quad pr_T:A_T\to T,\quad \pi_B:B_T\to B,\quad \pi_T:B_T\to T
	\]
	along with the closed immersion $\ell_T:B_T\to A_T.$
	\[
	    % https://tikzcd.yichuanshen.de/#N4Igdg9gJgpgziAXAbVABwnAlgFyxMJZAJgBoBGAXVJADcBDAGwFcYkQAhEAX1PU1z5CKAMwVqdJq3YBBHnxAZseAkTHEJDFm0QgAyvP7KhRcuJpbpugCqHFAlcORmADJqk6QMgPq3eRwVUUF1I3Cw92Dl8eCRgoAHN4IlAAMwAnCABbJDMQHAgkMhA4AAssFJwkEMltdjQ0u3Ssqpp8nNb6LEZ2EogIAGsQcNrdAB1RmEZumkZ6ACNJgAUHE100rHiSyv8QJuzEMTyCxCLLT3ihkFmFxmXjIJB1ze2FPaQAFlbj3LO6tO85DN5ksVg9GDAKo0MvsAKxfFo1KwgcZoLDeLhAm53QLCR4bLZQ5qIOFHD4dLo9PqDYZI8aTRjRHZvRCfUkHGmeerRTEg+648GQpnQpAktrsxGeFFo2w826g3FPAlConVMWnCJjUaomLcIA
    \begin{tikzcd}
        B_T \arrow[rrd, "\pi_B"'] \arrow[r, "\ell_T", hook] \arrow[rd, "\pi_T"'] & A_T \arrow[rrd, "pr_A"] \arrow[d, "pr_T"] & & \\
        & T \arrow[rrd, "g"'] & B \arrow[r, "\ell"', hook] \arrow[rd, "\pi"] & A \arrow[d, "pr"]\\
        & & & S                
    \end{tikzcd}
	\]
	
	Since $g:T\to S$ is a smooth morphism, so too are $pr_A$ and $\pi_B.$ We also have that
	\[
		\Omega_{A_T/A}^1=pr_T^*\Omega_{T/S}^1\qquad\text{and}\qquad\Omega_{B_T/B}^1=\pi_T^*\Omega_{T/S}^1.
	\]
    Since $S$ and $T$ are affine we get a split exact sequence
	\[
		0\to g^*\Omega_S^1\to\Omega_T^1\to\Omega_{T/S}^1\to 0
	\]
	which by pulling back induces a splitting of the morphism of exact sequences
	%https://tikzcd.yichuanshen.de/#N4Igdg9gJgpgziAXAbVABwnAlgFyxMJZABgBpiBdUkANwEMAbAVxiRGJAF9T1Nd9CKAIzkqtRizZoATgH0AggD0AVAB1VAeQC2MAOZ0FioVx4gM2PASIAmUdXrNWiEOu16DwebIAqnIyd4LASIAZjtxRyk5bxVXHX1ZYG8AegBlP2NuQP4rFAAWcIdJZw4ssz5LQWQRITEipxAACnUYBgYfAEpZNVU0LFkAIVjNeIMhzNNzHKrbWvsJBubVVvbvLp63BOABnwyA8qDc5DC5iOKmlrbO7vU+n2HNjxT0-zKpyqIyU-q2UsmK4L5UjfBa-LhiGBQXTwIigABm0ggWiQZBAOAgSCEZQRSMx1HRSGs2MRyMQtjRGMQIWJuKp+MpeRppJEFKQAFYmYT6UgAGycumsxAAdn5AA5uYgOaYcaS2RK+dKSbyJSLFbShRKAJycCicIA
	\[
	\begin{tikzcd}
		0 \arrow[r] & pr_A^*\Omega_A^1 \arrow[r] \arrow[d]  & \Omega_{A_T}^1 \arrow[r] \arrow[d] & pr_T^*\Omega_{T/S}^1 \arrow[r] \arrow[d]  & 0 \\
		0 \arrow[r] & (\ell_T)_*\pi_B^*\Omega_B^1 \arrow[r] & (\ell_T)_*\Omega_{B_T}^1 \arrow[r] & (\ell_T)_*\pi_T^*\Omega_{T/S}^1 \arrow[r] & 0.
	\end{tikzcd}
	\]
	This means that both horizontal sequences split and the middle vertical morphism can be taken to respect the splitting. Taking exterior powers yields a commutative diagram,
	%https://tikzcd.yichuanshen.de/#N4Igdg9gJgpgziAXAbVABwnAlgFyxMJZABgBpiBdUkANwEMAbAVxiRAB12B5AWxgHM6AfWABBIQBUAvgD0ARiCml0mXPkIoAjOSq1GLNpzlZ+ENMzgi0AagCOAXjlS0AJyGiZAKk68Bwj2icEHh8cAAErpJePnyCIhIA9ADKsraKyiAY2HgERGSauvTMrIggABScMAwMkgCUQt7cscLAAEKSsgpKKtnqRNoF1EUGpUYmZhZWdo7Obh6NVTWNvnGtMoHswVihEW4S0U1+8cmpirowUPzwRKAAZi4QPEhkIDgQSNogDFhgJSBwEG+UBA1AAFjA6MDEGAmNVqDg6FgGGxIL8QV86HIqgAFVQ5DRfGC3HDpO4PJ6IABM8PeiAAzN0QPdHs8aUhKYzmRTPm8kAyKFIgA
	\[
	\begin{tikzcd}
		\Omega_{A_T}^b \arrow[r] \arrow[d] & \bigoplus_{p+q=b}pr_A^*\Omega_A^p\otimes pr_T^*\Omega_{T/S}^q \arrow[d] \\
		(\ell_T)_*\Omega_{B_T}^b \arrow[r] & \bigoplus_{p+q=b}pr_A^*\ell_*\Omega_B^p\otimes pr_T^*\Omega_{T/S}^q    
	\end{tikzcd},
	\]
	in which the horizontal arrows are isomorphisms. Hence we get an isomorphism
	\[
		\Omega_{A_T,B_T}^b\cong \bigoplus_{p+q=b}pr_A^*\Omega_{A,B}^p\otimes pr_T^*\Omega_{T/S}^q.    
	\]
	Thus
	\begin{equation}
		g_*R^a(pr_T)_*\Omega_{A_T,B_T}^b\cong \bigoplus_{p+q=b}R^a(pr_S)_*\Omega_{A,B}^p\otimes g_*\Omega_{T/S}^q.  
		\label{eqn2:lem:1}
	\end{equation}
	\indent Now assume that $(A,B,S)$ satisfies the condition of (\ref{eqn1:lem:1}) and fix $a$ and $b$ such that $a\le n$ and $a+b\le n+c.$ Then for each $b'\le b$ and each $t\in T$ we have $R^a(pr_S)_*\Omega_{A,B}^{b'}=0.$ In particular this sheaf vanishes at $g(t).$ It follows from (\ref{eqn2:lem:1}) that $R^a(pr_T)_*\Omega_{A_T,B_T}^{b}$ vanishes at $t.$ Thus $R^a(pr_T)_*\Omega_{A_T,B_T}^{b}=0$ and $(A_T,B_T,T)$ satisfies the condition of (\ref{eqn1:lem:1}).\\
	\indent Conversely, suppose that $g$ is surjective and $(A_T,B_T,T)$ satisfies the condition of (\ref{eqn1:lem:1}). Fix $a$ and $b$ such that $a\le n$ and $a+b\le n+c$ and $s\in S.$ Then since $R^a(pr_T)_*\Omega_{A_T,B_T}^{b}$ vanishes at each $t\in g^{-1}(s),$ (\ref{eqn2:lem:1}) tells us that $R^a(pr_S)_*\Omega_{A,B}^{b}$ vanishes at $s.$ Thus $(A,B,S)$ satisfies the condition of (\ref{eqn1:lem:1}) as well.
	For the general case  we pass to an appropriate affine cover of $S$ and $T$ and we apply the above argument.
\end{proof}

Now we return to the case of the universal hypersurface $\mathscr X$ lying in $P\times X$. We have a short exact sequence
\[
	0\to\Omega^b_{P\times X,\mathscr X}\to\Omega_{P\times X}^b\to\ell_*\Omega_{\mathscr X}^b\to 0.
\]
By passing to the long exact sequence associated to $(pr_X)_*$ we find that the conclusion of Proposition \ref{prop:N2} is equivalent to the vanishing of $R^a (pr_X)_*\Omega^b_{P\times X,\mathscr X}$ for $a\le n$ and $a+b\le n+c.$ Hence it suffices to prove
\begin{proposition}
	Suppose that $m\gg 0$ is sufficiently large. Then for $a\le n$ and $a+b\le n+c$ we have that
	\begin{equation}
		R^a (pr_X)_*\Omega^b_{P\times X,\mathscr X} = 0.
		\label{eqn:prop:N2'}
	\end{equation}
\label{prop:N2'}
\end{proposition}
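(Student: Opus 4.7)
The plan is to push everything forward by $pr_X$ to $X$ and reduce the vanishing to a statement about cohomology of twisted differentials on the fiber $\mathbb{P}^d$ via Bott's formula. Combining the defining short exact sequence with the conormal sequence $0 \to \mathcal{N}^* \to \ell^*\Omega^1_{P \times X} \to \Omega^1_{\mathscr X} \to 0$ of the smooth hypersurface (so $\mathcal{N}^* = \mathcal O_{\mathscr X}(-\mathscr X)$), taking exterior powers, and applying $\ell_*$, I would extract the short exact sequence
\begin{equation*}
    0 \to \Omega^b_{P \times X}(-\mathscr X) \to \Omega^b_{P \times X, \mathscr X} \to \ell_*\bigl(\mathcal{N}^* \otimes \Omega^{b-1}_{\mathscr X}\bigr) \to 0
\end{equation*}
and analyze the two pieces separately.

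For the first piece, the product decomposition $\Omega^b_{P\times X} = \bigoplus_{p+q=b} pr_P^* \Omega^p_P \otimes pr_X^* \Omega^q_X$ together with $\mathcal O_{P\times X}(-\mathscr X) = pr_P^* \mathcal O_P(-1) \otimes pr_X^* \mathcal O_X(-m)$ and the projection formula give
\begin{equation*}
    R^a(pr_X)_*\Omega^b_{P\times X}(-\mathscr X) = \bigoplus_{p+q=b} H^a(P, \Omega^p_P(-1)) \otimes \Omega^q_X(-m).
\end{equation*}
Bott's formula asserts that $H^a(\mathbb{P}^d, \Omega^p_P(-1)) = 0$ whenever $a < d$; since $d = \dim H^0(X, \mathcal O_X(m)) - 1$ grows without bound with $m$, taking $m \gg 0$ forces $d > n + c$, and the entire first piece has vanishing $R^a(pr_X)_*$ throughout the range $a \le n$.

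For the second piece, iterate the conormal sequence. Twisting by $\mathcal{N}^{*\otimes k}$ yields
\begin{equation*}
    0 \to \mathcal{N}^{*\otimes(k+1)} \otimes \Omega^{b-k-1}_{\mathscr X} \to \mathcal{N}^{*\otimes k} \otimes \ell^*\Omega^{b-k}_{P\times X} \to \mathcal{N}^{*\otimes k} \otimes \Omega^{b-k}_{\mathscr X} \to 0,
\end{equation*}
and the middle term, after $\ell_*$, sits inside the ideal-sheaf sequence $0 \to \Omega^{b-k}_{P\times X}(-(k{+}1)\mathscr X) \to \Omega^{b-k}_{P\times X}(-k\mathscr X) \to \ell_*\ell^*\Omega^{b-k}_{P\times X}(-k\mathscr X) \to 0$ whose outer terms have vanishing $R^a(pr_X)_*$ for $a < d$ by the same Bott argument applied to $\Omega^p_P(-j)$ with $j \ge 1$. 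Chaining the connecting homomorphisms across $k = 1, \ldots, b$ shows that the vanishing of $R^a(\pi_X)_*(\mathcal{N}^* \otimes \Omega^{b-1}_{\mathscr X})$ for $a \le n$ reduces, after at most $b$ steps, to the vanishing of $R^{a+b-1}(pr_X)_*\ell_*\mathcal{N}^{*\otimes b}$, which in turn follows from Bott applied to $H^{a+b-1}(\mathbb{P}^d, \mathcal O_P(-b))$ and $H^{a+b}(\mathbb{P}^d, \mathcal O_P(-b-1))$, both zero once $d > n + b$.

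The main obstacle is the bookkeeping across the iteration: each use of the conormal sequence shifts the cohomological degree up by one, so Bott vanishing is needed in every degree up to roughly $n + c$. This is exactly what dictates how large $m$ must be chosen relative to $n$ and $c$, and corresponds to Nori's use of Serre-type vanishing in his original approach.
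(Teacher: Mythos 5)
The proposition as printed contains a typo (which you reproduced faithfully): the pushforward in the statement should be $(pr_P)_*$, not $(pr_X)_*$. This is clear from three places in the surrounding text. First, Proposition \ref{prop:N2}, of which Proposition \ref{prop:N2'} is supposed to be a restatement via the long exact sequence, concerns the map $\ell^*\colon R^a(pr_P)_*\Omega^b_{P\times X}\to R^a(\pi_P)_*\Omega^b_{\mathscr X}$, so the relevant long exact sequence is the one for $(pr_P)_*$. Second, the paper immediately rewrites condition (\ref{eqn:prop:N2'}) as a condition on a triple $(A,B,S)$ in which $pr\colon A\to S$ is the projection \emph{to the parameter space}, and then says ``the proposition states that the triple $(P\times X,\mathscr X,P)$ satisfies the condition,'' so the projection is $pr_P$. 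Third, Lemma \ref{lem:1} is only about changing the base $S$ of the triple and could not transport a statement about pushing forward to $X$.

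Because you read the statement literally, your proof pushes forward to $X$ and reduces everything to Bott vanishing on the fibers $\mathbb P^d$ of $pr_X$. As an argument for the \emph{literal} statement $R^a(pr_X)_*\Omega^b_{P\times X,\mathscr X}=0$ this is essentially fine: the short exact sequence $0\to\Omega^b_{P\times X}(-\mathscr X)\to\Omega^b_{P\times X,\mathscr X}\to\ell_*(\mathcal N^*\otimes\Omega^{b-1}_{\mathscr X})\to 0$ is correct, the K\"unneth/projection-formula computation of $R^a(pr_X)_*\Omega^b_{P\times X}(-j\mathscr X)$ is correct, Bott gives $H^a(\mathbb P^d,\Omega^p_P(-j))=0$ for $a<d$ and $j\ge1$, and the iteration of the twisted conormal sequence together with the ideal-sheaf sequence does reduce the second summand to line-bundle cohomology on $\mathbb P^d$, all of which vanishes as soon as $d$ is moderately larger than $n+c$. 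But this is a much weaker statement than what is intended: since $d=\dim P\to\infty$ as $m\to\infty$, vanishing along the $P$-direction is automatic in any bounded range of degrees, and it implies nothing about the restriction map of Proposition \ref{prop:N2}, whose fibers are taken over $P$, not over $X$.

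The content of the proposition lies entirely in the $X$-direction, and this is where your approach cannot work: after pushing to $P$ (or to the affine parameter space $S$ in the paper's reduction), the second piece $\ell_*(\mathcal N^*\otimes\Omega^{b-1}_{\mathscr X})$ has fibers which are sheaves on the hypersurfaces $X_p$, not on projective space, so Bott's formula does not apply. The paper instead (i) uses Lemma \ref{lem:1} to replace $(P\times X,\mathscr X,P)$ by the triple $(S\times X, B, S)$ with $S=H^0(X,\mathcal O_X(m))$ affine; (ii) filters $\Omega^b_{A,B}$ by the Leray filtration of the vector-bundle projection $pr_X\colon A\to X$ so that the graded pieces are pulled back from $X$; (iii) identifies those graded pieces with sheaves built from $\mathcal B$ (the sheaf of sections of $B\to X$) and $\mathcal D^\gamma=\coker(\mathcal B^{\otimes\gamma}\to\mathcal A^{\otimes\gamma})$; and (iv) proves, via Serre duality on $X$ and Serre vanishing for coherent sheaves on products $X^{\gamma+1}$ (Theorem \ref{thm:Serre} and Corollaries \ref{cor:1}, \ref{cor:2}), that the relevant cohomology groups $H^i(X,\mathcal F\otimes\mathcal B^{\otimes\gamma}\otimes\mathcal O_X(m))$ and $H^i(X,\mathcal F\otimes\mathcal D^\gamma)$ vanish for $i>0$ once $m$ is large. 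The ``$m\gg0$'' condition is precisely the Serre vanishing threshold; it has nothing to do with the dimension $d$ of $P$. You should rework the argument with the correct projection $pr_P$, at which point you will find that Bott vanishing is no longer available and the Serre-vanishing machinery of the paper becomes necessary.
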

If we have a triple $(A,B,S)$ where $pr:A\to S$ and $\pi:B\to S$ are morphisms and $B\hookrightarrow A$ is a closed immersion then we say that the triple $(A,B,S)$ satisfies the condition in(\ref{eqn:prop:N2'}) if for $a\le n$ and $a+b\le n+c$ we have that
\[
	R^a (pr)_*\Omega^b_{A,B} = 0.
\]
So the proposition states that the triple $(P\times X,\mathscr X, P)$ satisfies the condition in (\ref{eqn:prop:N2'}).

Let $S=H^0(X,\mathcal O_X(m)),$ $A=S\times X$ and $B\subseteq A$ the vanishing locus of the universal section. We claim it is enough to prove that the condition in (\ref{eqn1:lem:1}) holds for this triple $(A,B,S).$ Assuming that $(A,B,S)$ indeed satisfies the condition in (\ref{eqn1:lem:1}), let $U=S-\{0\}.$ Letting $A_U=U\times X$ and $B_U=A_U\cap B,$ we see that applying the lemma to the smooth map $U\to S$ gives us that $(A_U,B_U,U)$ satisfies the condition in (\ref{eqn1:lem:1}) as well. Finally we have a surjective smooth map $U\to P$ which after applying the lemma again tells us that $(P\times X,\mathscr X,P)$ satisfies the condition in (\ref{eqn1:lem:1}). Hence we aim to prove that $(A,B,S)$ satisfies the condition in (\ref{eqn1:lem:1}). For the rest of the proof we let	\[
	pr_X:A\to X,\quad pr_S:A\to S,\quad \pi_X:B\to X,\quad \pi_S:B\to S
\]
denote the projections and $\ell:B\to A$ the inclusion. Note that $pr_X$ and $\pi_X$ are vector bundles over $X$ of ranks $\dim H^0(X,\mathcal O_X(m))$ and $\dim H^0(X,\mathcal O_X(m))-1$ respectively.\\
\indent The map $pr_X$ is smooth, hence yields a Leray filtration $L$ on each sheaf of relative forms $\Omega_{A,B}$ given by the formula
\[
    L^m\Omega_{A,B}^b =\Omega_{A,B}^{b-m}\otimes pr_X^*\Omega_X^m.
\]
The graded pieces are
\[
\gr_L^m\Omega_{A,B}^b = \Omega_{(A,B)/X}^{b-m}\otimes pr_X^*\Omega_X^m
\]
where $\Omega^k_{(A,B)/X}=\Omega^k_{(A,B)}/pr_X^*\Omega_X.$ To prove the claim, it then suffices to prove it for each graded piece:
\begin{equation}\tag{A}
	\alpha \le n,\quad \alpha+\beta+\gamma \le n+c\implies R^\alpha(pr_S)_*\left(\Omega_{(A,B)/X}^{\gamma}\otimes pr_X^*\Omega_X^\beta\right)=0.
\label{eqn:A}
\end{equation}
\indent We can reduce the problem further. Let $\mathcal A=\Gamma(X,\mathcal O_X(m))\otimes_\mathbb C \mathcal O_X$ and $\mathcal B$ be the sheaves of sections of the vector bundles $A\to X$ and $B\to X$ respectively. They fit into the short exact sequence $$0\to\mathcal B\to\mathcal A\to\mathcal O_X(m)\to 0.$$ The sheaf of ideals for $B$ inside of $\mathcal O_A$ is $\mathcal I=pr_X^*\mathcal O_X(m).$ The map $\Omega_{A/X}^\gamma\to\Omega_{B/X}^\gamma$ can then be identified with the composition
\[
	pr_X^*\bigwedge\nolimits^{\!\gamma}\mathcal A^*\to pr_X^*\bigwedge\nolimits^{\!\gamma}\mathcal B^*\to pr_X^*\bigwedge\nolimits^{\!\gamma}\mathcal B^*\otimes \mathcal O_A/\mathcal I.
\]
If we let $\mathcal E^\gamma=\coker(\bigwedge^\gamma\mathcal B\to\bigwedge^\gamma\mathcal A),$ then it follows that the sequence
\[
	0\to pr_X^*\left(\mathcal E^\gamma\right)^*\to \Omega_{(A,B)/X}^\gamma\to pr_X^*\left(\bigwedge\nolimits^{\!\gamma}\mathcal B^*\right)\otimes\mathcal I\to 0
\]
is exact. Now observe that $\mathcal E^\gamma$ is a summand of $\mathcal D^\gamma=\coker(\mathcal B^{\otimes\gamma}\to\mathcal A^{\otimes\gamma})$ and that $\bigwedge^\gamma \mathcal B^*$ is a summand of $\left(\mathcal B^{\otimes\gamma}\right)^*.$ Putting this all together, (\ref{eqn:A}) is implied by
\begin{equation}\tag{B}
	\alpha \le n,\quad \alpha+\beta+\gamma \le n+c\implies R^{\alpha}(pr_S)_*\left(pr_X^*(\Omega_X^\beta\otimes(\mathcal B^{\otimes\gamma})^*)\otimes\mathcal I\right)=0.
\label{eqn:B}
\end{equation}
and
\begin{equation}\tag{C}
	\alpha \le n,\quad \alpha+\beta+\gamma \le n+c\implies H^{\alpha}\left(X,\Omega_X^\beta\otimes (\mathcal D^{\gamma})^*\right)=0
\label{eqn:C}
\end{equation}
%Maybe I want to elaborate on how we reduce to (B) and (C) more...
Recalling that $\mathcal I=pr_X^*\mathcal O_X(m)$ and applying Serre duality we see that (\ref{eqn:B}) and (\ref{eqn:C}) are implied by the following proposition, replacing $\mathcal F$ with $\Omega_X^{n+1-\beta}.$
\begin{proposition}
	Let $\mathcal F$ be a coherent sheaf on $X$ and $\gamma\ge 0$ an integer. Then there is constant $N=N(\mathcal F,\gamma)$ such that $m\ge N$ implies
	\begin{enumerate}
	    \item $H^i(X,\mathcal F\otimes\mathcal B^{\otimes\gamma}\otimes\mathcal O_X(m))=0$ for all $i>0,$
		\item $H^i(X,\mathcal F\otimes\mathcal D^\gamma)=0$ for all $i>0.$
	\end{enumerate}
\label{prop:N3}
\end{proposition}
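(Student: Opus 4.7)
Plan. The strategy is to prove (1) by induction on $\gamma$ and to deduce (2) from (1) at smaller exponents by filtering $\mathcal{D}^\gamma$.

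For the reduction of (2) to (1), I consider the decreasing filtration of $\mathcal A^{\otimes\gamma}$ by $F^i = \mathcal B^{\otimes i} \otimes \mathcal A^{\otimes(\gamma-i)}$, with $F^0 = \mathcal A^{\otimes\gamma}$ and $F^\gamma = \mathcal B^{\otimes\gamma}$; this descends to a filtration of $\mathcal D^\gamma = \mathcal A^{\otimes\gamma}/\mathcal B^{\otimes\gamma}$. Applying the defining sequence $0 \to \mathcal B \to \mathcal A \to \mathcal O_X(m) \to 0$ in each tensor slot identifies the successive quotients $F^i/F^{i+1}$ with $\mathcal B^{\otimes i} \otimes \mathcal O_X(m) \otimes \mathcal A^{\otimes(\gamma-i-1)}$ for $i = 0, \ldots, \gamma - 1$. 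Since $\mathcal A$ is a trivial bundle, these graded pieces are direct sums of copies of $\mathcal B^{\otimes i} \otimes \mathcal O_X(m)$, to which (1) applies at exponent $i < \gamma$. Walking the resulting long exact sequences after tensoring with $\mathcal F$ then yields (2).

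For (1), the base case $\gamma = 0$ is Serre vanishing for $\mathcal F \otimes \mathcal O_X(m)$. For the inductive step, I tensor the defining sequence with $\mathcal F \otimes \mathcal B^{\otimes(\gamma-1)} \otimes \mathcal O_X(m)$ to obtain
\[
0 \to \mathcal F \otimes \mathcal B^{\otimes\gamma} \otimes \mathcal O_X(m) \to \mathcal F \otimes \mathcal A \otimes \mathcal B^{\otimes(\gamma-1)} \otimes \mathcal O_X(m) \to \mathcal F \otimes \mathcal B^{\otimes(\gamma-1)} \otimes \mathcal O_X(2m) \to 0.
\]
The middle term is a direct sum of copies of $\mathcal F \otimes \mathcal B^{\otimes(\gamma-1)} \otimes \mathcal O_X(m)$, whose higher cohomology vanishes by induction. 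For the right term, I iterate the same construction, producing sequences that reduce step by step to $\mathcal F \otimes \mathcal O_X(km)$ for growing $k$, which is handled by Serre vanishing. The resulting long exact sequences give $H^i = 0$ for $i \ge 2$ once $m$ is sufficiently large.

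The main obstacle is the case $i = 1$, where the long exact sequence only shows
\[
H^1(\mathcal F \otimes \mathcal B^{\otimes\gamma} \otimes \mathcal O_X(m)) \cong \coker\bigl(H^0(\mathcal O_X(m)) \otimes H^0(\mathcal F \otimes \mathcal B^{\otimes(\gamma-1)} \otimes \mathcal O_X(m)) \to H^0(\mathcal F \otimes \mathcal B^{\otimes(\gamma-1)} \otimes \mathcal O_X(2m))\bigr).
\]
Since $\mathcal B = \mathcal B_m$ itself depends on $m$, one cannot simply invoke Castelnuovo--Mumford regularity for a single fixed sheaf. I would address this by strengthening the inductive hypothesis to provide vanishing of $H^i(\mathcal G \otimes \mathcal B^{\otimes\gamma'} \otimes \mathcal O_X(km))$ for all $i > 0$, all $k \ge 1$, and all coherent $\mathcal G$ (with $\gamma' < \gamma$). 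This gives a uniform Castelnuovo--Mumford bound for $\mathcal F \otimes \mathcal B^{\otimes(\gamma-1)}$ which, combined with the projective normality of the embedding defined by $\mathcal O_X(m)$ for $m \gg 0$, forces the displayed multiplication map to be surjective and closes the induction.
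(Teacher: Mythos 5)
Your reduction of (2) to (1) via the filtration $F^i = \mathcal B^{\otimes i}\otimes\mathcal A^{\otimes(\gamma-i)}$ is clean and correct, and it differs from the paper, which instead realizes $\mathcal B^{\otimes\gamma}$ and $\mathcal D^\gamma$ simultaneously as pushforwards $(p_0)_*(\mathcal Z^0\otimes\mathcal W)$ and $(p_0)_*(\mathcal Z^1\otimes\mathcal W)$ from $Z = X^{\gamma+1}$. Your treatment of $H^i$ for $i\ge 2$ in the induction for (1) is also fine. The problem is exactly where you flag it: the surjectivity of the multiplication map controlling $H^1$, and your proposed fix does not close this gap.

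Concretely, to show that
\[
H^0(\mathcal O_X(m))\otimes H^0\bigl(\mathcal F\otimes\mathcal B^{\otimes(\gamma-1)}\otimes\mathcal O_X(m)\bigr)\longrightarrow H^0\bigl(\mathcal F\otimes\mathcal B^{\otimes(\gamma-1)}\otimes\mathcal O_X(2m)\bigr)
\]
is surjective via Mumford's theorem, you need $\mathcal F\otimes\mathcal B^{\otimes(\gamma-1)}$ to be $1$-regular in the embedding given by $\mathcal O_X(m)$, i.e.\ $H^i\bigl(\mathcal F\otimes\mathcal B^{\otimes(\gamma-1)}\otimes\mathcal O_X((1-i)m)\bigr)=0$ for all $i>0$. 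This involves the twists $\mathcal O_X(0), \mathcal O_X(-m),\ldots,\mathcal O_X(-nm)$, but your strengthened hypothesis only provides vanishing for twists $\mathcal O_X(km)$ with $k\ge 1$. The extra generality in $\mathcal G$ does not rescue this: you would want to plug in $\mathcal G = \mathcal F\otimes\mathcal O_X(-cm)$, but then $\mathcal G$ depends on $m$, so the inductive statement ``there exists $N(\mathcal G,\gamma')$ such that $m\ge N$ implies\ldots'' no longer applies. Moreover you cannot extend the strengthened hypothesis to $k\le 0$, since already its base case $\gamma'=0$ would assert $H^i(\mathcal G)=0$ for all coherent $\mathcal G$, which is false. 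So the inductive step genuinely stalls at $H^1$.

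The paper avoids this circularity with a different mechanism. It builds a complex $\mathcal C^\bullet$ on $Z=X^{\gamma+1}$ out of the structure sheaves of the partial diagonals $\Delta(F)$, sets $\mathcal Z^q=\ker\delta^q$, and proves $(p_0)_*(\mathcal Z^0\otimes\mathcal W)=\mathcal B^{\otimes\gamma}$ and $(p_0)_*(\mathcal Z^1\otimes\mathcal W)=\mathcal D^\gamma$ with $\mathcal W=\bigotimes_{j=1}^\gamma p_j^*\mathcal O_X(m)$. Crucially, the sheaves $\mathcal Z^q$ are built only from the diagonals and are \emph{independent of $m$}; all $m$-dependence is concentrated in the line bundle $\mathcal W$. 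Vanishing then comes from a Serre-type theorem on a product with several projective factors (the paper's Theorem~\ref{thm:Serre} and Corollaries~\ref{cor:1}, \ref{cor:2}), with finiteness of $q|_{\supp\mathcal Z^1}$ handling the extra untwisted factor in case (2). That separation of the $m$-dependence from the coherent sheaf is precisely the idea your inductive approach is missing; without something like it, the $H^1$ surjectivity has to be established by another route.
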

To prove this proposition, we need two preliminary results. These require a version of Serre's vanishing theorem. The proofs of all three can be found in \cite{Nor93} (see also \cite{Ser55}).
\begin{theorem}
	Let $\mathcal G$ be a coherent sheaf on $X\times \mathbb P^{m_1}\times\cdots\times \mathbb P^{m_r}$ and let $p_j$ for $0\le j\le r$ denote the projections. Then
	\[
		R^i(p_0)_*\left(\mathcal G\otimes p_1^*\mathcal O(b_1)\otimes\cdots\otimes p_r^*\mathcal O(b_r)\right)=0
	\]
	for all $i>0$ as long as $\min\{b_1,\ldots,b_r\}$ is sufficiently large.
\label{thm:Serre}
\end{theorem}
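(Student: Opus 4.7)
The plan is to prove this refined form of Serre's vanishing by induction on $r$, using a resolution argument designed to produce a bound that is uniform in all the twisting parameters. For the base case $r=1$, the morphism $p_0 \colon X \times \mathbb{P}^{m_1} \to X$ is projective and $p_1^*\mathcal O(1)$ is relatively ample, so the statement reduces to the classical Serre vanishing theorem for a projective morphism.

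For general $r$, I would first resolve $\mathcal G$ by sheaves of the form $\mathcal P_k = p_0^*\mathcal F_k \otimes p_1^*\mathcal O(-n_k) \otimes \cdots \otimes p_r^*\mathcal O(-n_k)$ with $\mathcal F_k$ coherent on $X$. Such a surjection onto any coherent sheaf $\mathcal H$ on the total space exists because $\mathcal L = p_1^*\mathcal O(1) \otimes \cdots \otimes p_r^*\mathcal O(1)$ is $p_0$-ample (via the Segre embedding), so $\mathcal H \otimes \mathcal L^n$ is $p_0$-globally generated for $n$ large; this yields a surjection $p_0^*(p_0)_*(\mathcal H \otimes \mathcal L^n) \twoheadrightarrow \mathcal H \otimes \mathcal L^n$ and hence a surjection of the desired form onto $\mathcal H$. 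Iterating on successive kernels and truncating at length $d = m_1 + \cdots + m_r$ produces a finite exact sequence
\[
0 \to \mathcal K \to \mathcal P_{d-1} \to \cdots \to \mathcal P_0 \to \mathcal G \to 0.
\]

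Next, using the projection formula for $p_0$ combined with flat base change and Künneth on the projective space factor, a direct computation gives
\[
R^i p_{0*}\bigl(\mathcal P_k \otimes p_1^*\mathcal O(b_1) \otimes \cdots \otimes p_r^*\mathcal O(b_r)\bigr) \cong \mathcal F_k \otimes_{\mathbb C} \bigoplus_{i_1 + \cdots + i_r = i} \bigotimes_{j=1}^{r} H^{i_j}\bigl(\mathbb P^{m_j}, \mathcal O(b_j - n_k)\bigr),
\]
which vanishes for $i > 0$ whenever $b_j \ge n_k$ for every $j$, since $H^{i_j}(\mathbb P^{m_j}, \mathcal O(c)) = 0$ for $c \ge 0$ and $i_j > 0$. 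Setting $N = \max_k n_k$, every $\mathcal P_k$ has vanishing higher direct images after twisting whenever $\min\{b_1,\ldots,b_r\} \ge N$. Breaking the resolution into short exact sequences and taking long exact sequences then yields, for $i \ge 1$, a chain of injections
\[
R^i p_{0*}(\mathcal G \otimes T) \hookrightarrow R^{i+1} p_{0*}(\mathcal Z_1 \otimes T) \hookrightarrow \cdots \hookrightarrow R^{i+d} p_{0*}(\mathcal K \otimes T),
\]
where $T = p_1^*\mathcal O(b_1) \otimes \cdots \otimes p_r^*\mathcal O(b_r)$ and the $\mathcal Z_k$ denote the successive kernels. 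The last term vanishes because $i + d > d$ and the fibers of $p_0$ have dimension $d$, so Grothendieck vanishing forces $R^j p_{0*} = 0$ for $j > d$.

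The main obstacle is securing the uniformity of $N$ simultaneously in all the $b_j$'s. A naive Leray argument along the projection forgetting the last factor does not suffice: while standard Serre vanishing controls higher direct images for $b_r$ large, the bound on $b_1,\ldots,b_{r-1}$ coming from the inductive hypothesis applied to $q_*(\mathcal G \otimes p_r^*\mathcal O(b_r))$ could in principle grow with $b_r$. The resolution approach circumvents this by making $N$ depend only on the finitely many integers $n_k$ appearing in a single resolution of $\mathcal G$, and therefore only on $\mathcal G$ itself.
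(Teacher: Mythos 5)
Your proof is correct. Since the paper does not give its own proof of this statement --- it defers to \cite{Nor93} and \cite{Ser55} --- there is no in-text argument to compare against, but your devissage strategy (resolve by sheaves of the form $p_0^*\mathcal F_k\otimes\mathcal L^{-n_k}$, compute their twisted higher direct images by K\"unneth, then chase long exact sequences and kill the tail by the relative-dimension bound on $R^jp_{0*}$) is exactly the classical mechanism going back to Serre's FAC and is almost certainly the same route taken in the cited sources. The key points all check out: $\mathcal L=p_1^*\mathcal O(1)\otimes\cdots\otimes p_r^*\mathcal O(1)$ is $p_0$-very ample via the Segre embedding, so the requisite surjections $p_0^*(p_0)_*(\mathcal H\otimes\mathcal L^{n})\otimes\mathcal L^{-n}\twoheadrightarrow\mathcal H$ exist; the identification $R^ip_{0*}(p_0^*\mathcal F_k\otimes p^*\mathcal E'')\cong\mathcal F_k\otimes_{\mathbb C}H^i(P,\mathcal E'')$ is valid even for non-locally-free $\mathcal F_k$ because $\mathbb C$ is a field (the \v Cech complex over an affine open $\spec A\subset X$ is literally $M\otimes_{\mathbb C}C^\bullet(\mathcal E'')$); and the chain of injections terminates in $R^{i+d}p_{0*}(\mathcal K\otimes T)=0$ for $i\ge1$ since the fibers of $p_0$ have dimension $d$. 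Your closing remark about why a Leray-type induction fails to give uniformity in the $b_j$'s is the right diagnosis and exactly why the resolution is needed.

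One cosmetic remark: you announce a proof ``by induction on $r$,'' but the resolution argument you then give for general $r$ is self-contained and never invokes the inductive hypothesis (it already handles $r=1$). The separate base-case paragraph is harmless but superfluous; you could drop both it and the phrase ``by induction on $r$.''
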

\begin{corollary}
	Let $Z=X^{\gamma+1}$ and let $p_j:Z\to X$ for $0\le j\le \gamma$ denote the projections. Let $\mathcal G$ be a coherent sheaf on $Z$ and let
	\[
		\mathcal W = p_1^*\mathcal O_X(m)\otimes\cdots\otimes p_\gamma^*\mathcal O_X(m)
	\]
	Then for all $i>0$ and all $r>0$ we have
	\[
		H^i\left(X,\mathcal O_X(m)\otimes (p_0)_*(\mathcal G\otimes\mathcal W)\right)=0
	\]
	as long as $m$ is sufficiently large.
\label{cor:1}
\end{corollary}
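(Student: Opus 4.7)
The strategy is to combine the Leray spectral sequence for $p_0 : Z \to X$ with classical Serre vanishing on $Z$. Specifically, I will first show that $R^i(p_0)_*(\mathcal G \otimes \mathcal W) = 0$ for $i > 0$ and $m$ sufficiently large, which by the projection formula upgrades to $R^i(p_0)_*(\mathcal G \otimes \mathcal W \otimes p_0^*\mathcal O_X(m)) = 0$. This collapses the Leray spectral sequence and rewrites the target cohomology on $X$ as cohomology on $Z$ of a single sheaf to which ordinary Serre vanishing on a projective variety applies.

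For the higher direct image vanishing, Theorem \ref{thm:Serre} is not directly applicable because $Z = X^{\gamma+1}$ is not of the form $X \times (\mathbb P^N)^\gamma$. I would fix this by using the very ample bundle $\mathcal O_X(1)$ to embed $X \hookrightarrow \mathbb P^N$ and obtain a closed immersion $\iota : Z \hookrightarrow Y := X \times (\mathbb P^N)^\gamma$ which is the identity on the first factor and the chosen embedding on each of the remaining $\gamma$ copies of $X$. Writing $q_0, \ldots, q_\gamma$ for the projections from $Y$, the projection formula for $\iota$ gives
\[
\iota_*(\mathcal G \otimes \mathcal W) \cong \iota_*\mathcal G \otimes \bigotimes_{j=1}^\gamma q_j^*\mathcal O_{\mathbb P^N}(m),
\]
and since $\iota$ is a closed immersion, $R^i(p_0)_*(\mathcal G \otimes \mathcal W) \cong R^i(q_0)_*\bigl(\iota_*\mathcal G \otimes \bigotimes_j q_j^*\mathcal O_{\mathbb P^N}(m)\bigr)$. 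Applying Theorem \ref{thm:Serre} to the coherent sheaf $\iota_*\mathcal G$ on $Y$ then yields the vanishing for $m \gg 0$.

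Once Leray provides the isomorphism
\[
H^i\bigl(X, (p_0)_*(\mathcal G \otimes \mathcal W) \otimes \mathcal O_X(m)\bigr) \cong H^i\bigl(Z, \mathcal G \otimes \bigotimes_{j=0}^\gamma p_j^*\mathcal O_X(m)\bigr),
\]
the remaining task is Serre vanishing on $Z$: the variety $Z = X^{\gamma+1}$ is smooth projective, and $\bigotimes_{j=0}^\gamma p_j^*\mathcal O_X(1)$ is very ample on $Z$ (by the Segre product of the $\gamma+1$ embeddings given by $\mathcal O_X(1)$), so classical Serre vanishing applied to the \emph{fixed} coherent sheaf $\mathcal G$ on $Z$ gives the desired vanishing for $m \gg 0$. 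No single step here is especially deep; I expect the main obstacle to be the bookkeeping in the first step, where one has to set up the closed immersion $\iota$ carefully so that the higher direct image question takes the precise form covered by Theorem \ref{thm:Serre} and avoid the trap of trying to apply Serre vanishing to a family of sheaves that itself depends on $m$.
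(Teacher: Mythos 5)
Your proof is correct; the paper itself omits the argument for this corollary and cites \cite{Nor93}, where essentially the same reduction appears. In particular, the device of embedding $Z\hookrightarrow X\times(\mathbb P^N)^\gamma$ via $\mathcal O_X(1)$ so that the vanishing $R^{>0}(p_0)_*(\mathcal G\otimes\mathcal W)=0$ falls within the literal hypotheses of Theorem~\ref{thm:Serre}, followed by Leray degeneration and absolute Serre vanishing on $Z$ for the ample bundle $\bigotimes_{j=0}^\gamma p_j^*\mathcal O_X(1)$, is exactly what is needed, and you correctly keep both auxiliary coherent sheaves ($\iota_*\mathcal G$ and $\mathcal G$) independent of $m$.
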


\begin{corollary}
	With the notation of Corollary \ref{cor:1}, let $q=\prod_{j=1}^\gamma p_j:Z\to X^\gamma$ and assume that the restriction of $q$ to $\supp\mathcal G$ is a finite morphism. Then for all $i>0$ we have
	\[
		H^i\left(X, (p_0)_*(\mathcal G\otimes \mathcal W)\right)=0.
	\]
\label{cor:2}
\end{corollary}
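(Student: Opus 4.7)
The plan is to exploit the product structure $Z = X^{\gamma+1} = X \times X^\gamma$, in which $p_0$ is the first projection and $q$ is the projection onto the last $\gamma$ factors. The key observation is that the line bundle $\mathcal W$ is pulled back from $X^\gamma$ via $q$: writing $\mathcal W' = \pi_1^*\mathcal O_X(m) \otimes \cdots \otimes \pi_\gamma^*\mathcal O_X(m)$ on $X^\gamma$ with $\pi_j$ the coordinate projections, we have $\mathcal W = q^*\mathcal W'$, and in particular $\mathcal W$ is trivial in the $p_0$-direction. This will let me compute the desired cohomology by running Leray spectral sequences for both projections.

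First I would establish that $R^i(p_0)_*(\mathcal G \otimes \mathcal W) = 0$ for all $i > 0$ once $m$ is large enough. This is a direct application of Theorem~\ref{thm:Serre} after embedding $X \hookrightarrow \mathbb P^N$ via $\mathcal O_X(1)$ (so that $Z$ becomes a closed subscheme of $X \times (\mathbb P^N)^\gamma$ and $\mathcal O_X(m)$ is the restriction of $\mathcal O(m)$); this is the same reduction already used to prove Corollary~\ref{cor:1}. The Leray spectral sequence for $p_0$ then degenerates, giving $H^i(X, (p_0)_*(\mathcal G \otimes \mathcal W)) = H^i(Z, \mathcal G \otimes \mathcal W)$.

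Next I would rewrite $H^i(Z, \mathcal G \otimes \mathcal W)$ using the projection $q$. The hypothesis that $q|_{\supp \mathcal G}$ is finite gives $R^iq_*\mathcal G = 0$ for $i > 0$, and combining this with the projection formula for $\mathcal W = q^*\mathcal W'$ yields
\[
q_*(\mathcal G \otimes \mathcal W) \cong q_*\mathcal G \otimes \mathcal W',\qquad R^iq_*(\mathcal G \otimes \mathcal W) \cong R^iq_*\mathcal G \otimes \mathcal W' = 0 \text{ for } i > 0.
\]
Hence the Leray spectral sequence for $q$ also degenerates, giving $H^i(Z, \mathcal G \otimes \mathcal W) = H^i(X^\gamma, q_*\mathcal G \otimes \mathcal W')$. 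Finally, $\mathcal W'$ is an external tensor product of very ample line bundles on $X^\gamma$ and is therefore ample, so Serre vanishing applied to the coherent sheaf $q_*\mathcal G$ gives $H^i(X^\gamma, q_*\mathcal G \otimes \mathcal W') = 0$ for $i > 0$ once $m$ is sufficiently large; this vanishing can again be obtained from Theorem~\ref{thm:Serre} by embedding $X^\gamma$ into a product of projective spaces and pushing forward to a point. Choosing $m$ large enough to make all three vanishings hold simultaneously completes the proof.

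I do not expect a serious obstacle here: the main conceptual point is simply coordinating the two Leray spectral sequences, and the finiteness hypothesis on $q|_{\supp \mathcal G}$ is precisely what allows us to dispense with the extra $\mathcal O_X(m)$ twist present in Corollary~\ref{cor:1}, since after pushing forward via $q$ no additional ample direction on the $p_0$-factor is needed. The only bookkeeping to check is that the ``$m \gg 0$'' required for each of the three vanishings can be made uniform.
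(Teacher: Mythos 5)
Your proof is correct and takes essentially the same route as the paper's (which is deferred to \cite{Nor93}): use finiteness of $q|_{\supp\mathcal G}$ together with the projection formula for $\mathcal W=q^*\mathcal W'$ to identify $H^i\left(X,(p_0)_*(\mathcal G\otimes\mathcal W)\right)$ with $H^i\left(X^\gamma,q_*\mathcal G\otimes\mathcal W'\right)$, and then conclude by Serre vanishing as in Theorem \ref{thm:Serre}. Your explicit check that the Leray spectral sequence for $p_0$ degenerates (which also requires $m\gg 0$) is a point the paper's sketch leaves implicit, and you handle it correctly.
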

%proof of the corollary
\iffalse
\begin{proof}
    Since the restriction of $q$ to $\supp\mathcal G$ is finite, we have the equality
    \[
        	H^i\left(X, (p_0)_*(\mathcal G\otimes \mathcal W)\right) = H^i\left(X^\gamma,q_*\mathcal G\otimes\bigotimes_{i=1}^\gamma p_i^*\mathcal O_X(m)\right).
    \]
    The latter vanishes by Theorem \ref{thm:Serre}.
\end{proof}
\fi

\indent With these results we can proceed with the proof of Proposition \ref{prop:N3}. We keep the notation of the corollaries. The goal is to find coherent sheaves $\mathcal Z^0$ and $\mathcal Z^1$ on $Z$ such that
\begin{enumerate}
	\item $\mathcal Z^0$ and $\mathcal Z^1$ are flat with respect to $p_0:Z\to X.$
	\item $(p_0)_*(\mathcal Z^0\otimes \mathcal W)=\mathcal B^{\otimes\gamma}$ and $(p_0)_*(\mathcal Z^1\otimes \mathcal W)=\mathcal D^\gamma.$
	\item The restriction of $q$ to the support of $\mathcal Z^1$ is a finite morphism.
\end{enumerate}
If we let $\mathcal G=\mathcal Z^i \otimes p_0^*\mathcal F$ and apply Corollary \ref{cor:1} when $i=0$ and Corollary \ref{cor:2} when $i=1$ we get the desired result. Hence it remains to find appropriate $\mathcal Z^0$ and $\mathcal Z^1.$\\
\indent Define a complex of sheaves $\mathcal C^\bullet$ on $Z$ in the following way. For each subset $F\subseteq \{1,\ldots,\gamma\}$ we let
\[
	\Delta(F)=\{z\in Z\mid p_0(z)=p_i(z)\text{ for all }i\in F\}.
\]
Take note that $q$ restricted to each $\Delta(F)$ is a finite morphism. Let $\mathcal O_{\Delta(F)}$ denote the structure sheaf of $\Delta(F)$ (or, more precisely, the pushforward to $Z$ of the structure sheaf of $\Delta(F)$). Letting $\mathbb C[F]$ denote the free vector space on $F,$ we then define
\[
	\mathcal C^q = \bigoplus_{|F|=q}\bigwedge\nolimits^{\! q}(\mathbb C[F])\otimes \mathcal O_{\Delta(F)}.
\]
\indent Now we define the coboundary maps. Suppose $F\subseteq\{1,\ldots,\gamma\}$ and $|F|=q.$ If $G=F\cup\{a\}$ with $a\notin F$ then $\mathbb C[F]\subseteq \mathbb C[G].$ We then get an isomorphism
\[
	L(F,a):\bigwedge\nolimits^{\! q}(\mathbb C[F])\to\bigwedge\nolimits^{\! q+1}(\mathbb C[G])
\]
given by $L(F,a)(\omega)=a\wedge\omega.$ Furthermore, the inclusion $\Delta(G)\subseteq \Delta(F)$ gives a map on the structure sheaves
\[
	R(F,a):\mathcal O_{\Delta(F)}\to\mathcal O_{\Delta(G)}.
\]
The coboundary map $\delta^q:\mathcal C^q\to\mathcal C^{q+1}$ is then defined on the summand $\bigwedge^q(\mathbb Z[
F])\otimes \mathcal O_{\Delta(F)}$ to be $\sum_{a\notin F}L(F,a)\otimes R(F,a).$ It can be checked that $\mathcal C^\bullet$ is exact in degrees $q>0.$ Now let $\mathcal Z^q=\ker\delta^q.$ We get short exact sequences $T^q$
\begin{equation}
	0\to\mathcal Z^q\to\mathcal C^q\to\mathcal Z^q\to 0
	\label{eqn:seq}
\end{equation}
and since each $\mathcal C^q$ is $p_0$-flat by construction, decreasing induction on $q$ shows that each $\mathcal Z^q$ is $p_0$-flat.\\
\indent Denote by $\mathcal K^\bullet$ the two term complex 
\[
	0\to\mathcal A\to\mathcal O_X(m)\to 0
\]
where $\mathcal A$ is in degree $0.$ Then $(p_0)_*(\mathcal C^\bullet\otimes\mathcal W)=(\mathcal K^\bullet)^{\otimes\gamma}.$ It follows that
\begin{align*}
	\mathcal H^q((p_0)_*(\mathcal C^\bullet\otimes W))=
	\begin{cases}
		0 & q>0,\\
		\mathcal H^0(\mathcal K^\bullet)^{\otimes\gamma}=\mathcal B^{\otimes\gamma} & q=0.
	\end{cases}
\end{align*}
We get left exact sequences obtained by tensoring the exact sequence in \ref{eqn:seq} with $\mathcal W$ and applying $(p_0)_*$:
\[
	0\to (p_0)_*(\mathcal Z^q\otimes\mathcal W)\to (p_0)_*(\mathcal C^q\otimes\mathcal W)\to (p_0)_*(\mathcal Z^{q+1}\otimes\mathcal W).
\]
For $q=0$ we find that
\begin{align*}
	(p_0)_*(\mathcal Z^0\otimes\mathcal W) &= \ker((p_0)_*(\mathcal C^0\otimes\mathcal W)\to(p_0)_*(\mathcal Z^1\otimes\mathcal W))\\
	&=\ker((p_0)_*(\mathcal C^0\otimes\mathcal W)\to(p_0)_*(\mathcal C^1\otimes\mathcal W))\\
	&= \mathcal H^0((p_0)_*(\mathcal C^\bullet\otimes W))\\
	&=\mathcal B^{\otimes\gamma}.
\end{align*}
For $q=1$ we get
\begin{align*}
	(p_0)_*(\mathcal Z^1\otimes\mathcal W) &= \ker((p_0)_*(\mathcal C^1\otimes\mathcal W)\to(p_0)_*(\mathcal Z^2\otimes\mathcal W))\\
	&= \ker((p_0)_*(\mathcal C^1\otimes\mathcal W)\to(p_0)_*(\mathcal C^2\otimes\mathcal W))\\
	&= \im((p_0)_*(\mathcal C^0\otimes\mathcal W)\to(p_0)_*(\mathcal C^1\otimes\mathcal W))\\
	&= \coker((p_0)_*(\mathcal Z^0\otimes\mathcal W)\to(p_0)_*(\mathcal C^0\otimes\mathcal W))\\
	&= \coker(\mathcal B^{\otimes\gamma}\to\mathcal A^{\otimes\gamma})\\
	&= \mathcal D^\gamma.
\end{align*}
Thus $\mathcal Z^0$ and $\mathcal Z^1$ have the desired properties $(1)$ and $(2).$ Since $q$ restricted to each $\Delta(F)$ is a finite morphism, this gives us $(3),$ completing the proof.\\

\indent As a remark, Nori uses Proposition \ref{prop:N2'} along with the following proposition to deduce Theorem \ref{thm:N}.
\begin{proposition}
	Let $i:V\to U$ be a closed immersion and $p:U\to S$ be a morphism, where $U,$ $V,$ and $S$ are smooth complex varieties. Let $\Omega_U$ and $\Omega_V$ denote the respective cotangent sheaves. Suppose that for some integer $n$ the restriction map $i^*:R^a(p_S)_*\Omega_U^b\to R^a(p_S)_*\Omega_V^b$ is an isomorphism for $a<n,$ $a+b<2n$ and injective for $a\le n$ and $a+b\le 2n.$ Then the restriction map
	\[
		i^*:H^k(U,\mathbb Q)\to H^k(V,\mathbb Q)
	\]
	is an isomorphism for $k<2n$ and injective for $k=2n.$
	\label{prop:N4}
\end{proposition}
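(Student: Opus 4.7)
The plan is to derive the cohomological conclusion from the short exact sequence of sheaf complexes
\[
0 \to \Omega^\bullet_{U,V} \to \Omega^\bullet_U \to i_*\Omega^\bullet_V \to 0,
\]
built from the termwise kernels $\Omega^b_{U,V} = \ker(\Omega^b_U \to i_*\Omega^b_V)$ introduced in Section \ref{NoriPf}. By Grothendieck's algebraic de Rham theorem, $\mathbb{H}^k(U, \Omega^\bullet_U) = H^k(U,\mathbb{C})$ and $\mathbb{H}^k(U, i_*\Omega^\bullet_V) = H^k(V,\mathbb{C})$, so the associated long exact sequence of hypercohomology
\[
\cdots \to \mathbb{H}^k(U,\Omega^\bullet_{U,V}) \to H^k(U,\mathbb{C}) \xrightarrow{i^*} H^k(V,\mathbb{C}) \to \mathbb{H}^{k+1}(U,\Omega^\bullet_{U,V}) \to \cdots
\]
reduces the proposition to showing $\mathbb{H}^k(U,\Omega^\bullet_{U,V}) = 0$ for $k \le 2n$.

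To extract this vanishing, I would first translate the hypothesis into a vanishing statement for the termwise kernels. Applying $R(p_S)_*$ to $0 \to \Omega^b_{U,V} \to \Omega^b_U \to i_*\Omega^b_V \to 0$ and chasing the long exact sequence, the assumed isomorphism for $a<n$, $a+b<2n$ and injectivity for $a \le n$, $a+b \le 2n$ are together equivalent to
\[
R^a(p_S)_*\Omega^b_{U,V} = 0 \quad\text{for all } a \le n \text{ and } a+b \le 2n.
\]
This step is essentially the same translation used throughout Section \ref{NoriPf} to reduce Proposition \ref{prop:N2} to Proposition \ref{prop:N2'}.

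Next I would feed this vanishing into the stupid-filtration spectral sequence for the complex $Rp_*\Omega^\bullet_{U,V}$, whose $E_1$-page is exactly $R^a(p_S)_*\Omega^b_{U,V}$ and which abuts to the hypercohomology sheaves $\mathcal{H}^{a+b}(Rp_*\Omega^\bullet_{U,V})$ on $S$. Combined with the Leray spectral sequence $H^i(S, \mathcal{H}^j(Rp_*\Omega^\bullet_{U,V})) \Rightarrow \mathbb{H}^{i+j}(U,\Omega^\bullet_{U,V})$, the $E_1$-vanishing gives the desired $\mathbb{H}^k(U,\Omega^\bullet_{U,V}) = 0$ in the required range, and the conclusion over $\mathbb{Q}$ follows from the conclusion over $\mathbb{C}$ since $i^*$ is $\mathbb{Q}$-defined.

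The main obstacle is that the hypothesis only controls the $E_1$-page of the stupid-filtration spectral sequence for $a \le n$, whereas a priori entries with $a > n$ and $a+b \le 2n$ could still contribute nontrivially. Closing this gap requires additional geometric input: in Nori's application $p_S$ is a smooth projective morphism, so $R^a (p_S)_*$ vanishes above the relative dimension of $p_S$, and the remaining corner entries with $a$ just above $n$ can be handled via Serre duality on the fibres. With those degrees dispatched, the comparison of spectral sequences is a routine bookkeeping argument.
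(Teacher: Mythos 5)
Your reduction to showing $\mathbb H^k(U,\Omega^\bullet_{U,V})=0$ for $k\le 2n$ is correct, and the translation of the hypothesis into $R^a(p_S)_*\Omega^b_{U,V}=0$ for $a\le n$, $a+b\le 2n$ is also correct and matches the reduction used in the paper. But you then attempt to obtain the full hypercohomology vanishing purely from the stupid-filtration and Leray spectral sequences, and you yourself observe the fatal problem: the hypothesis says nothing about $R^a(p_S)_*\Omega^b_{U,V}$ for $a>n$, so the spectral sequences have uncontrolled $E_1$-terms in exactly the degrees $n<a\le 2n$ that contribute to $\mathbb H^k$ with $k\le 2n$. Your proposed patch --- invoking projectivity of $p$ to kill high direct images and ``Serre duality on the fibres'' for the remaining corner terms --- cannot save the argument: the proposition is stated for an arbitrary morphism $p\colon U\to S$ of smooth varieties with no properness assumption, and even in the Nori application $pr_P\colon P\times X\to P$ has fibre dimension $n+1$, so coherent-cohomology vanishing only kicks in above degree $n+1$, leaving the range $a=n+1,\ a+b\le 2n$ untreated by any direct argument you have sketched.

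The ingredient you are missing is precisely the Hodge-theoretic step that makes Nori's proposition work. The hypothesis is deliberately ``one-sided'': it controls only half of the spectral sequence, namely the entries with small $a$, or equivalently the deep part $G^p\mathbb H^{n+p}(U,\Omega^\bullet_{U,V})$ for $p\le n$ of the filtration $G^\bullet$ induced by the stupid filtration of $\Omega^\bullet_{U,V}$. Nori's key observation is that Deligne's Hodge filtration $F^\bullet$ on $\mathbb H^k(U,\Omega^\bullet_{U,V})\cong H^k(U,V;\mathbb C)$ satisfies $F^p\subset G^p$, so the partial vanishing of $G^\bullet$ forces $F^{k-n}\mathbb H^k=0$ for $k\le 2n$. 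Since $H^k(U,V)$ carries a mixed Hodge structure whose weights are bounded below by $k-1$, the conjugation symmetry on each pure graded piece $\gr^W_m$ (namely $\overline{H^{p,q}}=H^{q,p}$ for $p+q=m$) upgrades the vanishing of $F^{k-n}$ to the vanishing of the whole group once $k-n\le(m+1)/2$, which holds for all $m\ge k-1$ exactly when $k\le 2n$. That conjugation symmetry is what supplies the vanishing of the uncontrolled half of the spectral sequence; no coherent-cohomology argument alone can do this, because the statement is genuinely transcendental. You need to replace the last paragraph of your proposal with this comparison of the two filtrations and the weight argument.
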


%Incomplete
The idea of the proof of Proposition \ref{prop:N4} is to view the relative cohomology $H^{k}(U,V;\mathbb C)$ as the hypercohomology of the complex of relative forms $\Omega_{U,V}^\bullet.$ Nori then equips the hypercohomology with two filtrations, the Hodge filtration $F^\bullet$ of Deligne (see \cite{Del74III}) and the filtration $G^\bullet$ coming from the hypercohomology spectral sequence. It follows fairly easily from the definitions that
\[
    F^p\mathbb H^k(U,\Omega_{U,V}^\bullet)\subseteq G^p\mathbb H^k(U,\Omega_{U,V}^\bullet)
\]
for each $p.$ The hypothesis of the theorem allows one to study the spectral sequence and show that $G^p\mathbb H^{n+p}(U,\Omega_{U,V}^\bullet)=0$ for $p\le n.$ Hence much of the Hodge filtration vanishes and from this we can conclude.

\section{Background on D-Modules}\label{DMod}
In the next section we are going to use Nori's theorem and the decompostion theorem to get a result about the graded pieces of a certain D-module on $X.$ To that end we will in this section review some generalities about D-modules. A standard referece for this material is \cite{HTT08}.

We denote by $\mathcal D_Y$ the sheaf of differential operators on a smooth algebraic variety $Y$ of dimension $n.$ This will be a sheaf of noncommutative rings over $Y.$ The definition is as follows. First suppose that $A$ is a commutative ring. In the endomorphism ring $\End_k A$ of $A,$ identify $f\in A$ with the endomorphism $x\mapsto fx.$ We define the \textbf{ring of (algebraic) differential operators} $D(A)\subset\End_k A$ recursively by first setting
\[
    F_0D(A)=A
\]
and then for $j\ge 1$ setting
\[
    F_jD(A) = \left\{P\in\End_k A\mid [P,f]\in F_{j-1}D(A)\text{ for all }f\in A\right\}.
\]
We then make the definition
\[
    D(A)=\bigcup_{j=0}^\infty F_jD(A).
\]
\indent \textbf{The sheaf of (algebraic) differential operators on $Y$}, denoted by $\mathcal D_Y,$ is defined as the unique sheaf of $\mathcal O_Y$-modules such that for every open affine subset $U=\spec A$ of $Y,$ we have $\Gamma(U,\mathcal D_Y) = D(A).$ It comes with a filtration by coherent $\mathcal O_Y$ sheaves $F_j\mathcal D_Y$ given by gluing together the filtrations $F_j$ as defined above, and we have the equalities
\begin{align*}
    F_0\mathcal D_Y = \mathcal O_Y, \qquad F_j\mathcal D_Y/F_{j-1}\mathcal D_Y = \Sym^j\mathcal T_Y
\end{align*}
where $\mathcal T_Y$ denotes the tangent sheaf of $Y.$ We think of $F_j\mathcal D_Y$ as the ``differential operators of order $\le j.$'' We call a quasi-coherent $\mathcal O_Y$-module $\mathcal M$ a \textbf{(left) $D$-module on $Y$} if it also has a (left) action by $\mathcal D_Y.$ We say $\mathcal M$ is a \textbf{filtered $D$-module on $Y$} if it has a filtration by coherent $\mathcal O_Y$-modules $F_k\mathcal M$ such that for all $k\in\mathbb Z$ and $j\ge 0$ we have
\[
    F_j\mathcal D_Y\cdot F_k\mathcal M\subset F_{j+k}\mathcal M.
\]
\indent Given a $D$-module $\mathcal M,$ we can form its \textbf{de Rham complex}
\[
    \DR(\mathcal M) = \left[\mathcal M\to \Omega_Y^1\otimes \mathcal M \to \Omega_Y^2\otimes \mathcal M\to\cdots\to \Omega_Y^n\otimes \mathcal M\right]
\]
placed in degrees $-n,\ldots,0.$ The maps are the ones induced by the action of $\mathcal D_Y$ on $\mathcal M.$ We usually think of $\DR(\mathcal M)$ as living in the derived category $D_c(\mathbb C_Y)$ of sheaves of $\mathbb C$-vector spaces on $Y$ whose cohomology sheaves are constructible (i.e. they are locally constant with respect to some stratification of $Y$). When $\mathcal M$ is a filtered $D$-module there is an induced filtration on $\DR(\mathcal M)$ given by
\[
    F_j\DR(\mathcal M) = \left[F_j\mathcal M\to \Omega_Y^1\otimes F_{j+1}\mathcal M \to\cdots\to \Omega_Y^n\otimes F_{j+n}\mathcal M\right].
\]
\indent For the rest of this paper we will not be working with general filtered $D$-modules, but instead with Hodge modules. A \textbf{(mixed) Hodge module} is a filtered $D$-module that comes with another filtration, called the ``weight filtration,'' and which satisfies certain technical conditions. For the purposes of this paper, we will only give a few examples of the properties that Hodge modules satisfy. For a more comprehensive overview of the theory see \cite{Sch19}.\\
\indent Now let $f:Z\to Y$ be a morphism of smooth algebraic varieties and let $\mathcal M$ be a $D$-module on $Z.$ There is a pushforward of $\mathcal M,$ written $f_+\mathcal M,$ which is defined in the following way. First we tensor with the canonical sheaf $\omega_Z,$ then we take the derived tensor product with the \textbf{transfer module} $\mathcal D_{Z\to Y}:= \mathcal O_Z\otimes_{f^{-1}\mathcal O_Y} f^{-1}\mathcal D_Y.$ Next, we take the derived pushforward, and finally we apply the functor $\sHom_{\mathcal D_Y}(\omega_Y,-)$. So in full we have the definition
\[
    f_+\mathcal M := \sHom_{\mathcal D_Y}\left(\omega_Y,\mathbf R f_*\left(\left(\omega_Z\otimes_{\mathcal O_Z}\mathcal M\right)\otimes_{\mathcal D_Z}^{\mathbf L}\mathcal D_{Z\to Y}\right)\right)
\]%Not sure if this definition is entirely correct
The definition is so cumbersome because, for technical reasons, the pushforward is only easily defined for right $D$-modules, so we need to first convert $\mathcal M$ into a right $D$-module by tensoring with $\omega_Z$ and then convert back by applying the functor $\sHom_{\mathcal D_Y}\left(\omega_Y,-\right).$ However, as defined the pushforward has some convenient properties. For example, from \cite[Theorem 7.1.1]{HTT08} we have the formula
\[
    \DR(f_+\mathcal M) = \mathbf Rf_* \DR(\mathcal M).
\]
If $\mathcal M$ is a filtered $D$-module, then the natural filtrations on the transfer module and the dualizing modules $\omega_Y$ and $\omega_Z$ induce a filtration on $f_+\mathcal M,$ so the pushforward also makes sense for filtered $D$-modules. One reason for working with Hodge modules is that they interact nicely with certain functors such as the pushforward. If $\mathcal M$ is a Hodge module on $Z$ then we have the following interaction with taking graded pieces (see \cite[Theorem 6.8]{Sch19}):
\[
     \gr_p^F\DR(f_+\mathcal M) = \gr_p^F\mathbf Rf_* \DR(\mathcal M) = \mathbf Rf_*\gr_p^F\DR(\mathcal M).
\]
The fundamental theorem here is the \textbf{decomposition theorem} (see \cite[Corollary 3.7]{Sch19}):%Can I mention the history here and just cite a source? (Brosnan, Fang, Nie, Pearlstein). Or maybe just cite Schnell here. Also not sure which hypotheses are correct...
\begin{theorem}
    Suppose $\pi:Z\to Y$ is a projective morphism of smooth varieties. Then there is a (not necessarily unique) isomorphism in the bounded derived category of filtered $D$-modules on $Y:$
    \[
        \pi_+\mathcal O_Z \cong \bigoplus_i \mathcal H^i\pi_+\mathcal O_Z[-i].
    \]
\label{thm:decomp}
\end{theorem}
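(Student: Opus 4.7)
The plan is to reduce the statement to Saito's decomposition theorem for pure Hodge modules under projective morphisms. Since $Z$ is smooth, the trivial $D$-module $\mathcal O_Z$, suitably shifted to $\mathcal O_Z[\dim Z]$, is the filtered $D$-module underlying the polarizable pure Hodge module given by the constant Hodge structure on $Z.$ Saito's theorem asserts that for a projective morphism $\pi:Z\to Y$ and a polarizable pure Hodge module $\mathcal N$ on $Z,$ there is a (non-canonical) splitting
\[
    \pi_+\mathcal N\cong\bigoplus_i \mathcal H^i\pi_+\mathcal N\,[-i]
\]
in the derived category of pure Hodge modules on $Y.$ Applying this with $\mathcal N$ the trivial Hodge module on $Z$ and forgetting down to the underlying filtered $D$-modules gives the stated isomorphism, since the Hodge-module pushforward is by construction compatible with $\pi_+$ on filtered $D$-modules.

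The proof of Saito's theorem itself proceeds through the relative hard Lefschetz theorem for polarizable Hodge modules. Choosing a $\pi$-ample line bundle on $Z$ with first Chern class $\eta,$ one obtains morphisms in the category of Hodge modules on $Y$
\[
    \eta^i:\mathcal H^{-i}\pi_+\mathcal N\to\mathcal H^i\pi_+\mathcal N
\]
(up to an appropriate Tate twist), and the content of relative hard Lefschetz is that these are all isomorphisms for $i\ge 0.$ Given this, a purely formal argument due to Deligne — the same one used in the perverse-sheaf decomposition theorem of Beilinson-Bernstein-Deligne — shows that any bounded complex in an abelian category whose cohomology objects admit such a Lefschetz-type operator splits as the direct sum of its shifted cohomology objects. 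Combined with compatibility of the Hodge and weight filtrations under this splitting, the decomposition lifts to the level of filtered $D$-modules.

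The main obstacle is therefore the relative hard Lefschetz theorem itself. In Saito's theory this is proved by induction on $\dim\supp\mathcal N,$ the base case reducing to the classical hard Lefschetz theorem for the cohomology of a smooth projective variety. The inductive step is carried out via the nearby and vanishing cycle functors along generic pencils of hyperplane sections, using the fact that the Hodge filtration, weight filtration, and polarization data are all compatible with these functors. This is the technical heart of Saito's theory of pure Hodge modules, and it is the reason the present paper — whose aim is to apply, rather than reprove, the theorem — simply invokes the result as a black box from \cite{Sch19}.
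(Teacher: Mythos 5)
The paper does not prove this theorem; it is cited directly as \cite[Corollary~3.7]{Sch19} and treated as a black box, exactly as you note in your final sentence. Your outline — realizing $\mathcal O_Z[\dim Z]$ as the filtered $D$-module underlying the trivial polarizable pure Hodge module, invoking Saito's decomposition theorem for pure Hodge modules under a projective morphism, reducing the splitting to relative hard Lefschetz plus Deligne's formal degeneration argument, and noting that hard Lefschetz is established by induction via nearby/vanishing cycles — is an accurate summary of the standard argument that the cited reference relies on, so there is nothing to compare beyond confirming that your sketch is the correct one.
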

Applying the Riemann-Hilbert correspondence to this decomposition yields a decomposition in the category of perverse sheaves
\[
     \DR(\pi_+\mathcal O_Z) \cong \bigoplus_i \mathcal H^i \mathbf R\pi_*\DR(\mathcal O_Z)[-i].
\]
We will say more about this decomposition in the case $Z=\mathscr X$ and $Y=P$ in the next section. We now discuss two examples.
\begin{example}
    The structure sheaf $\mathcal O_Y$ is a filtered $D$-module with the action of differentiation and the trivial filtration given by $F_0\mathcal O_Y=\mathcal O_Y$ and $F_{-1}\mathcal O_Y=0.$ In this case we have
    \[
        \DR(\mathcal O_Y) = \left[\mathcal O_Y\to\Omega_Y^1\to\cdots\to\Omega_Y^n\right]
    \]
    where the maps are the usual exterior derivatives. Note that this is quasi-isomorphic to the (shifted) constant sheaf $\mathbb C[n].$ The filtration is then given by $F_j\DR(\mathcal O_Y)=\DR(\mathcal O_Y)$ for $j>0$ and
    \[
        F_j\DR(\mathcal O_Y) = \left[\Omega_Y^{-j}\to\cdots\to\Omega_Y^n\right]
    \]
    for $j=-n,\cdots,0.$ It follows that
    \[
        \gr_{-p}^F\DR(\mathcal O_Y) = \Omega_Y^p[n-p]
    \]
    for $p=0,\ldots,n,$ where $\Omega_Y^p[n-p]$ denotes the sheaf $\Omega_Y^p$ considered as a one-term complex shifted into degree $-n+p.$
\end{example}

\begin{example}
    If $\mathcal M=\mathcal H$ is a (pure) variation of Hodge structure (VHS) on $Y,$ then the Gauss-Manin connection induces the structure of a $D$-module on $\mathcal H.$ In fact, the condition of Griffiths transversality is exactly the condition that $\mathcal H$ is actually a filtered $D$-module. Indeed, if we define $F_p\mathcal H=F^{-p}\mathcal H,$ where $F^\bullet$ denotes the Hodge filtration, then Griffiths transversality exactly says that
    \[
        F_1\mathcal D_Y\cdot F_p\mathcal H\subset F_{p-1}\mathcal H
    \]
    for every $p.$ By induction we then have that $\mathcal H$ is a filtered $D$-module. The filtration on the de Rahm complex is given by
    \begin{align*}
        F_{-p}\DR(\mathcal H)&=\left[F_{-p}\mathcal H\to F_{-p+1}\mathcal H\otimes\Omega_Y^1\to\cdots\to F_{-p+n}\mathcal H\otimes\Omega_Y^n \right]\\
        &=\left[F^p\mathcal H\to F^{p-1}\mathcal H\otimes\Omega_Y^1\to\cdots\to F^{p-n}\mathcal H\otimes\Omega_Y^n \right]
    \end{align*}
    where the maps are given by the Gauss-Manin connection. Hence the graded pieces take the form
    \[
        \gr_{-p}^F\DR(\mathcal H) = \left[\mathcal H^{p,n-p}\to\mathcal H^{p-1,n-p+1}\otimes\Omega_Y^1\to\cdots\right].
    \]
    In particular if $\mathcal H$ is a trivial VHS then the differentials in the above complex are zero; thus the cohomology sheaves take the form
    \begin{align*}
        \mathcal H^i\gr_{-p}^F\DR(\mathcal H) = \mathcal H^{p-d-i,n-p+d+i}\otimes \Omega_Y^{d+i} = \gr^F_{-p+n+i}\mathcal H\otimes \Omega_Y^{d+i}.
    \end{align*}
    This formula will be handy for a calculation in the next section.
\end{example}

% Need to seriously edit this section (yay.)
\section{Main Theorem}\label{MainThm}
We now return to the setting from the introduction. Recall that $X$ is a projective variety, $\mathcal O_X(1)$ is a very ample line bundle on $X,$ $P=\mathbb P\left(H^0(X,\mathcal O_X(m))\right)$ is a projective space of dimension $d,$ and $\mathscr X \subset P\times X$ is the universal hypersurface. Replacing $Y$ with $P\times X$, pushing forward by $pr_P$, and using the Riemann-Hilbert correspondence, we have
\begin{align*}
	\mathbf R (pr_P)_*\Omega^b_{P\times X}[-d-n-1+b] &= \mathbf R (pr_P)_*\gr^F_{-b}\DR_{P\times X}(\mathcal O_{P\times X})\\
	&= \gr^F_{-b}\DR((pr_P)_+\mathcal O_{P\times X}),
\end{align*}
and similarly for the universal hypersurface $\mathscr X,$ where $b\in\mathbb Z$ is fixed. For each $-n\le i\le n$ define
\begin{align*}
	K^{n+i}(X)=
	\begin{cases}
		H^{n+i}(X)& i\le 0\\
		H^{n+2+i}(X)(1)& i>0
	\end{cases}.
\end{align*}
Note that each $K^{n+i}(x)$ is a Hodge structure of weight $n+i.$ In \cite{Sch12} it is shown that
\begin{align*}
	\DR((pr_P)_+\mathcal O_{P\times X}) = \bigoplus_{i}A_i[-i],\qquad \DR((\pi_P)_+\mathcal O_{\mathscr X}) = \DR(\mathcal M)\oplus\bigoplus_{i}B_i[-i]
\end{align*}
where $A_i=H^{n+1+i}(X)\otimes \mathbb C[d]$ and $B_i=K^{n+i}(X)\otimes \mathbb C[d].$ In particular, all of the summands are Hodge modules supported on all of $P,$ and all except $\mathcal M$ come from constant variations of Hodge structure on $P.$
\indent Let us give a brief word on $\mathcal M,$ the only nontrivial Hodge module appearing here. Recall that $P^{\mathrm{sm}}\subseteq P$ is the open set over which the fibers of $\pi_P$ are smooth. Here we have a $D_{P^{\mathrm{sm}}}$-module $\mathcal H_{\mathrm{van}}^n.$ This is the vector bundle whose fiber at $p\in P^{\mathrm{sm}}$ is equal to $H_{\mathrm{van}}^n(X_p,\mathbb C),$ the vanishing cohomology of the hypersurface $X_p.$ Recall that this is defined as the kernel of the Gysin morphism $H^n(X_p,\mathbb C)\to H^{n+2}(X,\mathbb C)$ induced by the inclusion $X_p\subset X.$ The $D_{P^{\mathrm{sm}}}$-module structure is given by the Gauss-Manin connection $\nabla:\mathcal H_{\mathrm{van}}^n\to \mathcal H_{\mathrm{van}}^n\otimes \Omega_{P^{\mathrm{sm}}}^1.$ $\mathcal M$ is then obtained as the minimal extension of $\mathcal H_{\mathrm{van}}^n$ over the singular locus of $\pi_P.$ It is a Hodge module on $P$ with filtration $F_\bullet$ induced by the Hodge filtration of $\mathcal H_{\mathrm{van}}^n.$ See \cite{Sch12} for more on minimal extensions of $D$-modules.\\
\indent Using Proposition \ref{prop:N2} we can extract some information about the cohomology sheaves of $\gr^F_{-b}\mathcal M.$ Let us first get a handle on the graded pieces of $\DR((pr_P)_+\mathcal O_{P\times X}).$ For $k\in \bb N$ we have

\begin{align*}
	&\mathcal H^{-d-n-1+k}\gr^F_{-b}\DR((pr_P)_+\mathcal O_{P\times X})\\
	&\cong \bigoplus_{i=-n-1}^{n+1} \mathcal H^{-d-n-1+k}\gr^F_{-b}\left(H^{n+1+i}(X)\otimes \DR(\mathcal O_P)[-i]\right)\\
	&= \bigoplus_{i=-n-1}^{n+1} \mathcal H^{-d-n-1+k-i}\gr^F_{-b}\left(H^{n+1+i}(X)\otimes \DR(\mathcal O_P)\right)\\
	&= \bigoplus_{i=-n-1}^{n+1} \gr^F_{-b-n-1+k-i}H^{n+1+i}(X)\otimes \Omega_P^{k-n-1-i}\\
	&= \bigoplus_{i=-n-1}^{n+1} H^{b+n+1-k+i,k-b}(X)\otimes \Omega_P^{k-n-1-i}\\
	&= \bigoplus_{j=0}^{k} H^{b-k+j,k-b}(X)\otimes \Omega_P^{k-j}.
\end{align*}

%Below is the calculation with $\mathbb C$'s. Just in case I made a mistake
\iffalse
\begin{align*}
	&\mathcal H^{-d-n-1+k}\gr^F_{-b}\DR((pr_P)_+\mathcal O_{P\times X})\\
	&= \bigoplus_{i=-n-1}^{n+1} \mathcal H^{-d-n-1+k}\gr^F_{-b}\left(H^{n+1+i}(X)\otimes \mathbb C[d-i]\right)\\
	&= \bigoplus_{i=-n-1}^{n+1} \mathcal H^{k-n-1-i}\gr^F_{-b}\left(H^{n+1+i}(X)\otimes \mathbb C\right)\\
	&= \bigoplus_{i=-n-1}^{n+1} \gr^F_{-b+k-n-1-i}H^{n+1+i}(X)\otimes \Omega_P^{k-n-1-i}\\
	&= \bigoplus_{i=-n-1}^{n+1} H^{b-k+i+n+1,k-b}(X)\otimes \Omega_P^{k-n-1-i}\\
	&= \bigoplus_{j=0}^{k} H^{b-k+j,k-b}(X)\otimes \Omega_P^{k-j}.
\end{align*}%The reindexing here is worse. I should probably not do it (or fix it later) so as to not cause problems down the line.
\fi

Note that the first isomorphism here comes from the decomposition theorem and is not canonical. The last equality is just the reindexing $j=i+n+1$ so the indices don't get too long. We will return to the original indexing later when it is convenient. We can perform a similar calculation for $\DR((\pi_P)_+\mathcal O_{\mathscr X}):$
\begin{align*}
	&\mathcal H^{-d-n-1+k}\gr^F_{-b}\DR((\pi_P)_+\mathcal O_{\mathscr X})\\
	&\cong \mathcal H^{-d-n-1+k}\gr^F_{-b}\DR(\mathcal M)\oplus \bigoplus_{i=-n-1}^{n+1} \mathcal H^{-d-n-1+k}\gr^F_{-b}\left(K^{n+1+i}(X)\otimes \DR(\mathcal O_P)[-i]\right)\\
	&= \mathcal H^{-d-n-1+k}\gr^F_{-b}\DR(\mathcal M)\oplus \bigoplus_{i=-n-1}^{n+1} \mathcal H^{-d-n-1+k-i}\gr^F_{-b}\left(K^{n+1+i}(X)\otimes \DR(\mathcal O_P)\right)\\
	&= \mathcal H^{-d-n-1+k}\gr^F_{-b}\DR(\mathcal M)\oplus \bigoplus_{i=-n-1}^{n+1} \gr^F_{-b-n-1+k-i}K^{n+1+i}(X)\otimes \Omega_P^{k-n-1-i}\\
	&= \mathcal H^{-d-n-1+k}\gr^F_{-b}\DR(\mathcal M)\oplus \bigoplus_{i=-n-1}^{n+1} K^{b+n+1-k+i,k-b}(X)\otimes \Omega_P^{k-n-1-i}\\
	&= \mathcal H^{-d-n-1+k}\gr^F_{-b}\DR(\mathcal M)\oplus \bigoplus_{j=0}^{k} K^{b-k+j,k-b}(X)\otimes \Omega_P^{k-j}.
\end{align*}
It should be noted that we are taking the convention that $H^{p,q}=K^{p,q}=0$ if $p$ or $q$ are negative.\\ % In what follows, I would really like to refer to DR(O_P) as Q_P^H[d] but I don't really know how to do this...
\indent Let us take some time to discuss what the restriction map between these de Rham complexes looks like. Suppose $Y$ is a projective variety and $D$ is a smooth divisor in $Y.$ Let $U=Y\setminus D$ and let $i:D\to Y$ and $j:U\to Y$ be the inclusions. For a Hodge module $\mathcal V$ on $Y$ we have a distinguished triangle in the derived category
\[
    j_! j^! \mathcal V\to \mathcal V\to i_* i^* \mathcal V\to \cdots.
\]
When $\mathcal V=\DR(\mathcal O_Y),$ then $i^*\mathcal V=\DR(\mathcal O_D)[1]$ since the former starts in degree $-\dim Y$ whereas the latter starts in degree $-\dim D=-\dim Y + 1.$ Applying this to our setting, we get a map
\[
    \DR_P(\mathcal O_{P\times X})\to\ell_*\DR_P(\mathcal O_{\mathscr X})[1].
\]
Pushing forward using $pr_P$ and taking graded pieces then yields the restriction map
\[
    \ell^*:\gr_{-b}^F\DR_P((pr_P)_+\mathcal O_{P\times X})\to\gr_{-b}^F\DR_P((\pi_P)_+\mathcal O_{\mathscr X})[1].
\]
Hence on the $(-d-n-1+k)$-th cohomology sheaf, we have restriction maps
\begin{align*}
	\ell^*:\bigoplus_{i=0}^{k} H^{b-k+i,k-b}(X)\otimes \Omega_P^{k-i} \rightarrow \mathcal H^{-d-n+k}\gr_{-b}^F \DR(\mathcal M)\oplus \bigoplus_{j=0}^{k} K^{b-k+j,k-b}(X)\otimes \Omega_P^{k-j}.
\end{align*}
Now we can state the main result.
\begin{corollary}
	Let $k$ and $b$ be integers such that $k<2n$ and $k-b<n.$ Then the restriction map $\ell^*:\DR_P(pr_+\mathcal O_{P\times X})\to\DR_P(\pi_+\mathcal O_{\mathscr X})[1]$ induces an isomorphism
\begin{equation*}
	\bigoplus_{j=n+1}^{k} L^{j-n-1}H_{\mathrm{prim}}^{b-k+n+1,k-b-j+n+1}(X)\otimes \Omega_P^{k-j} \cong \mathcal H^{-d-n+k}\gr_{-b}^F \DR(\mathcal M),
\end{equation*}
%Maybe I should reindex i=j-n-1 here...
Where $L$ denotes the cup product with $c_1(\mathcal O_X(m)).$ In particular, for $k<n+1$ and $b>k-n$ we have $\mathcal H^{-d-n+k}\gr_{-b}^F \DR(\mathcal M)=0.$
	\label{cor:D}
\end{corollary}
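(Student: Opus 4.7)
My plan is to combine the explicit decompositions of the graded pieces derived in this section with Nori's isomorphism (Proposition~\ref{prop:N2}), identify the restriction map on the constant summands coming from the decomposition theorem, and extract the claimed kernel using Hard Lefschetz on $X$.

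First I would observe that the hypotheses $k<2n$ and $k-b<n$ are precisely those under which Proposition~\ref{prop:N2}, applied with $c$ sufficiently large (any $c\ge n$ suffices once $m$ is correspondingly large), guarantees that $R^{k-b}(pr_P)_*\Omega^b_{P\times X}\to R^{k-b}(\pi_P)_*\Omega^b_{\mathscr X}$ is an isomorphism. Under the D-module translation at the beginning of this section, this is the assertion that $\ell^*$ on the $(-d-n-1+k)$-th cohomology sheaf is a bijection from $\bigoplus_{j=0}^{k}H^{b-k+j,k-b}(X)\otimes\Omega_P^{k-j}$ onto $\mathcal H^{-d-n+k}\gr_{-b}^F\DR(\mathcal M)\oplus\bigoplus_{j=0}^{k}K^{b-k+j,k-b}(X)\otimes\Omega_P^{k-j}$.

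Next I would identify the second (``constant'') component of $\ell^*$. The summand $H^{b-k+j,k-b}(X)\otimes\Omega_P^{k-j}$ of the source records a Hodge piece of the fiber cohomology $H^{j}(X)$ for $pr_P$, while $K^{b-k+j,k-b}(X)\otimes\Omega_P^{k-j}$ records the constant part of the corresponding fiber cohomology $H^{j}(X_p)$ for $\pi_P$; fibrewise the map is restriction $H^{j}(X)\to H^{j}(X_p)$ composed with the decomposition-theorem identification of the constant part of $H^{j}(X_p)$ with $K^{j}(X)$. For $j\le n$ this identification is the Lefschetz hyperplane isomorphism, so the component of $\ell^*$ is the identity. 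For $j>n$ the identification $H^{j}(X_p)\cong K^{j}(X)=H^{j+2}(X)(1)$ is via the Gysin map, and the composite of restriction and Gysin is cup product with $c_{1}(\mathcal O_{X}(m))=L$; so on this summand the map is the Lefschetz operator.

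The third step is a Hard Lefschetz computation of the kernel. The $j\le n$ summands contribute $0$, and for $j\ge n+1$ the contribution is the $(b-k+j,k-b)$-Hodge piece of $\ker\bigl(L\colon H^{j}(X)\to H^{j+2}(X)(1)\bigr)$. Using the Lefschetz decomposition $H^{j}(X)=\bigoplus_{r\ge j-n-1}L^{r}H_{\mathrm{prim}}^{j-2r}(X)$, this kernel equals $L^{j-n-1}H_{\mathrm{prim}}^{2n+2-j}(X)$: one further application of $L$ is injective on $L^{r}H_{\mathrm{prim}}^{j-2r}(X)$ for $r>j-n-1$ and annihilates the bottom summand. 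Since $L^{j-n-1}$ has Hodge type $(j-n-1,j-n-1)$, extracting the $(b-k+j,k-b)$-component yields the image under $L^{j-n-1}$ of $H_{\mathrm{prim}}^{b-k+n+1,k-b-j+n+1}(X)$, which is precisely the $j$-th summand of the corollary. Combined with Step 1, the projection of $\ell^*$ to the first factor then restricts to an isomorphism from this kernel onto $\mathcal H^{-d-n+k}\gr_{-b}^F\DR(\mathcal M)$, and the vanishing for $k<n+1$ and $b>k-n$ is immediate since the index set $\{n+1\le j\le k\}$ is then empty. I expect the main obstacle to be Step 2: precisely identifying how the decomposition theorem realizes the upper-half constant summands of the target via the Gysin map, and verifying that the natural restriction-then-Gysin composite coincides with the Lefschetz operator on $H^*(X)$.
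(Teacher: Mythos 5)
Your proposal is correct and takes essentially the same route as the paper: translate via the $D$-module dictionary, invoke Nori's isomorphism, identify the constant-to-constant components of $\ell^*$ as identity (for $j\le n$) or the Lefschetz operator $L$ (for $j>n$), and compute $\ker L$ on $H^j(X)$ by the Lefschetz decomposition. The one small difference is that the paper justifies that the map is ``diagonal'' on the constant summands by an $\Ext$-vanishing argument for the decomposed Hodge modules (namely $\Hom(A_i[-i],B_j[-j+1])=\Ext^{i-j+1}(A_i,B_j)=0$ for $i<j-1$), whereas you justify it by a fibrewise identification via restriction and Gysin; both routes reach the same conclusion, and your remark about the restriction-then-Gysin composite being $L$ is exactly the content the paper takes for granted at that point.
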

\begin{proof}
    The map $\ell^*$ decomposes into a sum of maps. For simplicity we will first take a look at the summands not involving $\mathcal M.$ The restriction map on the level of Hodge modules, using the decomposition theorem, takes the form
    \[
        \bigoplus_i A_i[-i]\to \bigoplus_j B_j[-j+1]
    \]
    where $A_i$ and $B_j$ are as above. But $\Hom(A_i[-i],B_j[-j+1])=\Ext^{i-j+1}(A_i,B_j)$ which is zero if $i < j-1.$ It follows that the summands of the map on the level of cohomology sheaves can only be nonzero if $i\ge j-1.$ Since Proposition \ref{prop:N2} guarantees that this map is actually an isomorphism for certain $k$ and $b,$ it must be that the ``diagonal'' summands where $i=j-1$ are actually isomorphisms in these cases. Thus on the level of graded pieces, the summands of the restriction map which do not involve $\mathcal M$ take the form
    \begin{align*}
	    \ell_j^*:H^{b-k+j,k-b}(X)\otimes \Omega_P^{k-j}\rightarrow K^{b-k+j,k-b}(X)\otimes \Omega_P^{k-j}.
    \end{align*}
    \indent Now, for $j\le n$ we have $K^{b-k+j,k-b}(X)=H^{b-k+j,k-b}(X)$ and $\ell_j^*$ is simply the identity map. For $j>n$ we instead have $K^{b-k+j,k-b}(X)=H^{b-k+j+1,k-b+1}(X)$ and $\ell_j^*$ is the map $L,$ i.e. it is cup product with $c_1(\mathcal O_X(m)).$ Nori's theorem tells us that the map $\ell^*$ above is an isomorphism for $k<2n$ and $k-b<n.$ Since the identity map is clearly an isomorphism, it follows that the map
    \begin{align*}
		    \bigoplus_{j=n+1}^{k} H^{b-k+j,k-b}(X)\otimes \Omega_P^{k-j} \rightarrow \mathcal H^{-d-n+k}\gr_{-b}^F \DR(\mathcal M)
    \end{align*}
    must be surjective and map $\ker L$ isomorphically onto the codomain. In other words, this map induces the desired isomorphism.
\end{proof}

%%%%Cor d went here

\section{Applications}\label{Apps}
\indent This calculation continues to hold when we restrict to $P^{\mathrm{sm}}.$ Here we have
\begin{align*}
	\DR(\mathcal H_{\mathrm{van}}^n) = \left[\mathcal H_{\mathrm{van}}^n\mathop{\rightarrow}^\nabla \mathcal H_{\mathrm{van}}^n\otimes \Omega_{P^{\mathrm{sm}}}^1\mathop{\rightarrow}^\nabla  \mathcal H_{\mathrm{van}}^n\otimes\Omega_{P^{\mathrm{sm}}}^2 \rightarrow \cdots \rightarrow  \mathcal H_{\mathrm{van}}^n\otimes\Omega_{P^{\mathrm{sm}}}^d\right]
\end{align*}
and consequently
\begin{align*}
	\gr_{-b}^F \DR(\mathcal H_{\mathrm{van}}^n) = \left[\mathcal H_{\mathrm{van}}^{b,n-b}\mathop{\rightarrow}^{\overline\nabla} \mathcal H_{\mathrm{van}}^{b-1,n-b+1}\otimes \Omega_{P^{\mathrm{sm}}}^1\mathop{\rightarrow}^{\overline\nabla} \cdots\right]
\end{align*}
where $\overline\nabla$ is the map induced by $\nabla$ using Griffith's transversality. The isomorphism of Corollary $\ref{cor:D}$ gives us an explicit description of the cohomology of this complex of sheaves on $P^{\mathrm{sm}}.$ In the particular case $k=n$ and $b>0$ observe that this tells us that the map $$\overline\nabla:\mathcal H_{\mathrm{van}}^{b,n-b}\rightarrow \mathcal H_{\mathrm{van}}^{b-1,n-b+1}\otimes \Omega_{P^{\mathrm{sm}}}^1$$ is injective. Explicitly this means the following:
\begin{corollary}
	Let $b>0$ and suppose that $\sigma$ is a section of $F^b\mathcal H_{\mathrm{van}}^n$ such that $\nabla\sigma$ lies in $F^{b-2}\mathcal H_{\mathrm{van}}^n\otimes\Omega_{P^{\mathrm{sm}}}^1.$ Then $\sigma$ lies in $F^{b-1}\mathcal H_{\mathrm{van}}^n.$
\end{corollary}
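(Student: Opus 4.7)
The plan is to deduce this corollary as the explicit special case of Corollary \ref{cor:D} at $k=n$, transcribed to local sections of the Hodge bundle on $P^{\mathrm{sm}}$.

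First, I would specialize Corollary \ref{cor:D} to $k=n$. The hypotheses $k<2n$ and $k-b<n$ become $n<2n$ (automatic) and $n-b<n$, i.e.\ $b>0$, which matches the hypothesis of the present corollary. With $k=n$ the index set $\{j:n+1\le j\le n\}$ on the left-hand side of the isomorphism in Corollary \ref{cor:D} is empty, so the direct sum is zero. Hence the isomorphism yields
\begin{equation*}
\mathcal H^{-d}\gr_{-b}^F\DR(\mathcal M)=0.
\end{equation*}

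Next, I would restrict everything to the smooth locus $P^{\mathrm{sm}}$. Over $P^{\mathrm{sm}}$ the minimal extension $\mathcal M$ coincides with the variation $\mathcal H_{\mathrm{van}}^n$, so by the explicit description given at the beginning of Section \ref{Apps} the complex $\gr_{-b}^F\DR(\mathcal M)|_{P^{\mathrm{sm}}}$ takes the form
\begin{equation*}
\gr_{-b}^F\DR(\mathcal H_{\mathrm{van}}^n)=\bigl[\mathcal H_{\mathrm{van}}^{b,n-b}\xrightarrow{\overline\nabla}\mathcal H_{\mathrm{van}}^{b-1,n-b+1}\otimes\Omega_{P^{\mathrm{sm}}}^1\xrightarrow{\overline\nabla}\cdots\bigr],
\end{equation*}
placed in degrees $-d,-d+1,\ldots,0$. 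Its $(-d)$-th cohomology sheaf is the kernel of the first differential, so the vanishing just obtained is precisely the injectivity of
\begin{equation*}
\overline\nabla:\mathcal H_{\mathrm{van}}^{b,n-b}\hookrightarrow \mathcal H_{\mathrm{van}}^{b-1,n-b+1}\otimes\Omega_{P^{\mathrm{sm}}}^1,
\end{equation*}
which is what was noted just before the statement of the corollary.

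Finally, I would translate this injectivity into the filtration statement of the corollary. A local section $\sigma$ of $F^b\mathcal H_{\mathrm{van}}^n$ has a well-defined class $[\sigma]$ in the graded piece $\gr_F^b\mathcal H_{\mathrm{van}}^n=\mathcal H_{\mathrm{van}}^{b,n-b}$. The hypothesis on $\nabla\sigma$ says exactly that $\overline\nabla[\sigma]=0$ in the next graded piece tensored with $\Omega^1$. By the injectivity just established, $[\sigma]=0$, i.e.\ $\sigma$ descends one further step in the Hodge filtration, as claimed. Since the substantive content is already in Corollary \ref{cor:D}, there is no real obstacle; the only care required is bookkeeping of the Hodge filtration conventions (the relation $F_p=F^{-p}$ and Griffiths transversality) so that the hypothesis on $\nabla\sigma$ really does translate to $\overline\nabla[\sigma]=0$.
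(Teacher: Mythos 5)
Your proposal follows the paper's own proof exactly: specialize Corollary \ref{cor:D} to $k=n$, $b>0$ to get $\mathcal H^{-d}\gr_{-b}^F\DR(\mathcal M)=0$, identify this over $P^{\mathrm{sm}}$ with the injectivity of $\overline\nabla:\mathcal H_{\mathrm{van}}^{b,n-b}\to\mathcal H_{\mathrm{van}}^{b-1,n-b+1}\otimes\Omega_{P^{\mathrm{sm}}}^1$, and then unwind that injectivity as a statement about the Hodge filtration.

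The one place I would push back is your final ``translation'' step, and your closing hedge about filtration bookkeeping is well placed because the translation does not in fact match the corollary as literally stated. Working out the indices: $\overline\nabla$ is the map $\gr_F^b\mathcal H\to\gr_F^{b-1}\mathcal H\otimes\Omega^1_{P^{\mathrm{sm}}}$ induced by $\nabla:F^b\mathcal H\to F^{b-1}\mathcal H\otimes\Omega^1_{P^{\mathrm{sm}}}$, so for $\sigma\in F^b\mathcal H_{\mathrm{van}}^n$ the condition $\overline\nabla[\sigma]=0$ is that $\nabla\sigma\in F^b\mathcal H_{\mathrm{van}}^n\otimes\Omega^1_{P^{\mathrm{sm}}}$ (one step better than Griffiths), and $[\sigma]=0$ in $\gr_F^b$ means $\sigma\in F^{b+1}\mathcal H_{\mathrm{van}}^n$. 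So injectivity of $\overline\nabla$ yields: if $\nabla\sigma\in F^b\otimes\Omega^1$, then $\sigma\in F^{b+1}$. By contrast, the hypothesis $\nabla\sigma\in F^{b-2}\mathcal H\otimes\Omega^1$ printed in the corollary is automatic from Griffiths transversality ($\nabla F^b\subset F^{b-1}\otimes\Omega^1\subset F^{b-2}\otimes\Omega^1$), and the stated conclusion $\sigma\in F^{b-1}$ is automatic from $F^b\subset F^{b-1}$; as written the corollary says nothing, and your claim that ``the hypothesis on $\nabla\sigma$ says exactly that $\overline\nabla[\sigma]=0$'' is not correct for the printed indices. This is a misprint in the paper (the superscripts should read $F^b$ and $F^{b+1}$); your proof is of the intended statement, but you should state the corrected indices explicitly rather than leave the translation to a parenthetical about conventions.
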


\indent Another consequence of Corollary \ref{cor:D}
\begin{corollary}
    For $n\ge 3$ we have
    \[
        \bb H^{-d+1}(P^{\mathrm{sm}},\DR( \mathcal H_{\mathrm{van}}^n)) = H^n_{\mathrm{prim}}(X).
    \]
    \label{cor:N}
\end{corollary}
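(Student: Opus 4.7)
The plan is to rewrite $\bb H^{-d+1}(P^{\mathrm{sm}},\DR(\mathcal H_{\mathrm{van}}^n))$ as ordinary cohomology of the underlying local system via the Riemann--Hilbert correspondence, and then extract it by combining the Leray spectral sequence for $\pi_P$ with Nori's Theorem~\ref{thm:N}.

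First, because $\mathcal H_{\mathrm{van}}^n$ is the flat bundle underlying a polarized variation of Hodge structure on $P^{\mathrm{sm}}$, Riemann--Hilbert gives a quasi-isomorphism $\DR(\mathcal H_{\mathrm{van}}^n)\simeq\mathcal L_{\mathrm{van}}[d]$, where $\mathcal L_{\mathrm{van}}$ denotes the underlying local system of flat sections (fiberwise $H^n_{\mathrm{van}}(X_p,\mathbb C)$). Hence
\[
\bb H^{-d+1}\bigl(P^{\mathrm{sm}},\DR(\mathcal H_{\mathrm{van}}^n)\bigr)\;=\;H^1(P^{\mathrm{sm}},\mathcal L_{\mathrm{van}}).
\]

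Next I would pin down the higher direct image sheaves of $\pi^{\mathrm{sm}}:=\pi_P|_{\mathscr X^{\mathrm{sm}}}$. The Lefschetz hyperplane theorem gives $R^q\pi^{\mathrm{sm}}_*\mathbb C\cong H^q(X)\otimes\mathbb C_{P^{\mathrm{sm}}}$ for $q<n$; fiberwise Poincaré duality on the $n$-dimensional smooth hypersurfaces combined with Lefschetz hyperplane in degree $2n-q<n$ yields constancy for $q>n$; and Deligne's semisimplicity theorem, together with Lefschetz, splits the middle cohomology as
\[
R^n\pi^{\mathrm{sm}}_*\mathbb C\;=\;\mathcal L_{\mathrm{van}}\oplus\bigl(H^n(X)\otimes\mathbb C_{P^{\mathrm{sm}}}\bigr).
\]
Since $\pi^{\mathrm{sm}}$ is smooth projective, the Leray spectral sequence degenerates at $E_2$ (Deligne), and substituting the above identifications gives
\[
H^{n+1}(\mathscr X^{\mathrm{sm}},\mathbb C)\;=\;H^1(P^{\mathrm{sm}},\mathcal L_{\mathrm{van}})\oplus\text{(constant Leray summands)}.
\]

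Finally, the assumption $n\ge 3$ gives $n+1<2n$, so Nori's theorem identifies $H^{n+1}(\mathscr X^{\mathrm{sm}})$ with $H^{n+1}(P^{\mathrm{sm}}\times X)$ via the restriction $\ell^*$. Expanding the latter by Künneth and comparing with the Leray decomposition above, all Künneth summands $H^p(P^{\mathrm{sm}})\otimes H^q(X)$ with $q\le n$ cancel the corresponding constant Leray summands, and the single unmatched factor pins $H^1(P^{\mathrm{sm}},\mathcal L_{\mathrm{van}})$ down as the cokernel of the Lefschetz operator $L:H^{n-1}(X)\to H^{n+1}(X)$. By hard Lefschetz on $X$ and the Lefschetz decomposition, this cokernel is identified with the primitive cohomology $H^n_{\mathrm{prim}}(X)$.

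The main obstacle is the Leray--Künneth bookkeeping in the last step: one must verify that Nori's restriction isomorphism $\ell^*$ intertwines the two filtrations in such a way that the constant summands pair off cleanly, leaving only the single unmatched term described above. This follows from the naturality of the Leray spectral sequence under the inclusion $\mathscr X\hookrightarrow P\times X$ together with the explicit description of the constant Leray summands, but it requires care to state precisely; once in place, the identification with primitive cohomology via hard Lefschetz is routine.
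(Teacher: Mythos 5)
Your argument is essentially correct but takes a genuinely different route from the paper. The paper derives Corollary~\ref{cor:N} as an application of Corollary~\ref{cor:D}: it runs the hypercohomology spectral sequence for $\gr^F_{-b}\DR(\mathcal M)$, uses Corollary~\ref{cor:D} to show that for $b\ge 2$ the only nonvanishing term in total degree $-d+1$ is $E_2^{0,-d+1}\cong H^{b,n+1-b}_{\mathrm{prim}}(X)$, and then uses conjugation symmetry to sweep up the remaining Hodge graded pieces. Your proof instead bypasses the $D$-module machinery and Corollary~\ref{cor:D} entirely, invoking the Riemann--Hilbert / regular-singularities comparison to rewrite the hypercohomology as $H^1(P^{\mathrm{sm}},\mathcal L_{\mathrm{van}})$, and then using Nori's topological Theorem~\ref{thm:N} together with Deligne degeneration of the two Leray spectral sequences. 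This is attractive: it is more elementary, it makes the ``weight $n+1$'' of the answer visible immediately, and (since all you need is $n+1<2n$) it actually establishes the result for $n\ge 2$, not just $n\ge 3$.

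Two points deserve care. First, the ``cancellation'' in your last step cannot be read off summand-by-summand: the map $\ell^*$ preserves the Leray filtration but the individual graded maps $E_2^{p,n+1-p}(P^{\mathrm{sm}}\times X)\to E_2^{p,n+1-p}(\mathscr X^{\mathrm{sm}})$ are \emph{not} all isomorphisms; one sees directly that they are isomorphisms for $p\ge 2$, injective with cokernel $H^1(P^{\mathrm{sm}},\mathcal L_{\mathrm{van}})$ for $p=1$, and surjective with kernel $\ker\bigl(L:H^{n+1}(X)\to H^{n+3}(X)\bigr)=H^{n+1}_{\mathrm{prim}}(X)$ for $p=0$. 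A filtration-preserving isomorphism does not automatically induce isomorphisms on graded pieces, so this pattern needs an actual argument: downward induction on $p$ gives $\ell^*\colon L^pV\cong L^pW$ for $p\ge 2$, and then two applications of the snake lemma to the short exact sequences $0\to L^{p+1}\to L^p\to\gr^p\to 0$ produce the canonical identification $\ker(\gr^0\ell^*)\cong\coker(\gr^1\ell^*)$, i.e.\ $H^{n+1}_{\mathrm{prim}}(X)\cong H^1(P^{\mathrm{sm}},\mathcal L_{\mathrm{van}})$. So the bookkeeping does work, but it is this snake-lemma step (not mere ``naturality'') that makes it precise. Second, the cohomological degree on the right-hand side should be $H^{n+1}_{\mathrm{prim}}(X)$, not $H^n_{\mathrm{prim}}(X)$: the graded pieces computed in the paper's own proof are $H^{b,n+1-b}_{\mathrm{prim}}(X)$, which assemble to the middle primitive cohomology of the $(n+1)$-dimensional $X$, and the weights also only match in degree $n+1$. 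You have inherited what appears to be a typo in the paper's statement; your ``cokernel of $L\colon H^{n-1}(X)\to H^{n+1}(X)$'' is indeed $H^{n+1}_{\mathrm{prim}}(X)$ by the Lefschetz decomposition, so your own computation already points at the corrected degree.
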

\begin{proof}
    In the hypercohomology spectral sequence
    \begin{align*}
	    E_2^{p,q}=H^p(P^\mathrm{sm},\mathcal H^q\gr^F_{-b}\DR(\mathcal M)) \implies \bb H^{p+q}(P^\mathrm{sm},\gr^F_{-b}\DR(\mathcal M)),
    \end{align*}
    Corollary \ref{cor:D} shows us that for $b\ge 2$ the only nonzero $E_2$ term of degree $-d+1$ is $E_2^{0,-d+1}.$ By an argument from induction and the fact that $E_2^{\ell,-d+1-\ell+1}=0$ for each $\ell\ne 0$, we also see that the differentials $d_r$ to and from $E_r^{0,-d+1}$ are zero for all $r\ge 2.$ Thus $E_2^{0,-d+1}=E_{\infty}^{0,-d+1}$ and we have
    \begin{align*}
        \gr_{-b}^F\mathbb H^{-d+1}(P^\mathrm{sm},\DR(\mathcal M)) &= \mathbb H^{-d+1}(P^\mathrm{sm},\gr^F_{-b}\DR(\mathcal M))\\
        &= E_{\infty}^{0,-d+1}\\
        &= E_2^{0,-d+1}\\
        &= H^0(P^\mathrm{sm},\mathcal H^{-d+1}\gr_{-b}^F\DR(\mathcal M))\\
        &= H^{b,n+1-b}_{\mathrm{prim}}(X).
    \end{align*}
    Since $n\ge 3,$ and the morphisms are the graded pieces of a morphism of Hodge structures
    \[
         \bb H^{-d+1}(P^{\mathrm{sm}},\DR( \mathcal H_{\mathrm{van}}^n)) = H^n_{\mathrm{prim}}(X),
    \]
    the conjugation symmetry of the Hodge decomposition allows us to conclude that it is actually an isomorphism on all the graded pieces, thus the whole map of Hodge structures is an isomorphism.
\end{proof}

%Not sure how correct this is
Let's take a closer look at the isomorphism
\[
	H^{b,n+1-b}_{\mathrm{prim}}(X)\otimes \mathcal O_{P^{\mathrm{sm}}}\cong \mathcal H^{-d+1}\gr_{-b}^F\DR(\mathcal H_{\mathrm{van}}^n)
\]
arising from Corollary \ref{cor:D} in the case $k=n+1.$ In what follows let $L$ denote the Leray filtration on $\Omega_{\mathscr{X}}^b$ for the map $\pi$ given by
\[
	L^i\Omega_{\mathscr{X}}^b = \Omega_{\mathscr{X}}^{b-i}\wedge \pi^*\Omega_{P^{\mathrm{sm}}}^i.
\]
This filtration yields the Leray spectral sequence
\[
	E_2^{p,q}=R^{p+q}\pi_*\gr_L^p\Omega_{\mathscr X}^b\implies R^{p+q}\pi_*\Omega_{\mathscr X}^b.
\]
It can be shown (see \cite[Proposition 5.9]{Voi03II}) that the complexes
\[
	\mathcal K_{b,q} := \gr_{-b}^F \DR(\mathcal H^{b+q})[-d] = \mathcal H^{b,q} \mathop{\rightarrow}^{\overline\nabla} \mathcal H^{b-1,q+1}\otimes\Omega_{P^{\mathrm{sm}}}^1 \mathop{\rightarrow}^{\overline\nabla}\cdots
\]
can be identified with the first page of this spectral sequence. Explicitly we have $\mathcal K_{b,q}^p = E_1^{p,q}.$\\
\indent Consider a primitive class $\alpha\in H^{b,n+1-b}_{\mathrm{prim}}(X) \subseteq H^{n+1-b}(X,\Omega_X^b).$ Then $\alpha$ admits a pullback $\beta=\pi_X^*\alpha\in H^{n+1-b}(\mathscr X,\Omega_{\mathscr X}^b)$ which restricts to $0$ on each smooth hyperplane section of $X.$ In other words,
\[
	\beta\in\ker\left(H^{n+1-b}(\mathscr X,\Omega_{\mathscr X}^b)\to H^0(P^{\mathrm{sm}},R^{n+1-b}\pi_*\Omega_{\mathscr X/P^{\mathrm{sm}}}^b)\right)
\]
Consider now the image of $\beta$ in $H^0(P^{\mathrm{sm}},R^{n+1-b}\pi_*\Omega_{\mathscr X}^b),$ which we also denote by $\beta.$ Using the exact sequence
\[
	0\to L^1\Omega_{\mathscr X}^b\to \Omega_{\mathscr X}^b\to \Omega_{\mathscr X/P^{\mathrm{sm}}}^b\to 0
\]
we see that
\begin{align*}
	L^1R^{n+1-b}\pi_*\Omega_{\mathscr X}^b &= \im\left(R^{n+1-b}\pi_*L^1\Omega_{\mathscr X}^b\to R^{n+1-b}\pi_*\Omega_{\mathscr X}^b\right)\\
	&= \ker\left(R^{n+1-b}\pi_*\Omega_{\mathscr X}^b\to R^{n+1-b}\pi_*\Omega_{\mathscr X/P^{\mathrm{sm}}}^b\right).
\end{align*}
Hence $\beta$ actually lies in $H^0(P^{\mathrm{sm}},L^1R^{n+1-b}\pi_*\Omega_{\mathscr X}^b)$ and has an image in the vector space $H^0(P^{\mathrm{sm}},\gr_L^1R^{n+1-b}\pi_*\Omega_{\mathscr X}^b).$ For degree reasons, in the Leray spectral sequence we have 
\[
	\gr_L^1R^{n+1-b}\pi_*\Omega_{\mathscr X}^b = E_\infty^{1,n-b} \subseteq E_2^{1,n-b},
\]
and since

\iffalse
\[
	E_2^{1,n-b} = \frac{\ker(\overline\nabla:\mathcal H^{b,n-b}\to\mathcal H^{b-1,n-b+1}\otimes \Omega_{P^{\mathrm{sm}}}^1)}{\im(\overline\nabla:\mathcal H^{b-1,n-b+1}\otimes \Omega_{P^{\mathrm{sm}}}^1\to\mathcal H^{b-2,n-b+2}\otimes \Omega_{P^{\mathrm{sm}}}^2)}
\]
\fi

\[
    E_2^{1,n-b} = \mathcal H^1(E_1^{\bullet, n-b}) = \mathcal H^1(\mathcal K_{b,n-b}^\bullet) = \mathcal H^{-d+1}\gr_{-b}^F\DR(\mathcal H_{\mathrm{van}}^n)
\]
we get an image of $\beta$ in the global sections of $\mathcal H^{-d+1}\gr_{-b}^F\DR(\mathcal H_{\mathrm{van}}^n).$ Putting this all together yields a map
\[
	H^{b,n+1-b}_{\mathrm{prim}}(X)\otimes \mathcal O_{P^{\mathrm{sm}}}\to \mathcal H^{-d+1}\gr_{-b}^F\DR(\mathcal H_{\mathrm{van}}^n).
\]
It seems likely that this is the map of Corollary \ref{cor:D}, however I have not been able to check that these maps coincide.

\printbibliography

@article{Del74III,
    author = "Pierre Deligne",
    title = "Th\'eorie de Hodge III",
    journal = "Publicationes Math\'ematiques",
    number = "44",
    pages = "5--77",
    year = "1974",
    keywords = "Hodge theory"
}

@article{Nor93,
    author = "M. V. Nori",
    title = "Algebraic cycles and Hodge theoretic connectivity",
    journal = "Inventiones mathematicae",
    volume = "111",
    number = "2",
    pages = "349--373",
    year = "1993",
    DOI = "http://dx.doi.org/10.1007/s00208-011-0746-0",
    keywords = "algebraic cycles, connectivity"
}

@article{Sch12,
    author = "Christian Schnell",
    title = "Residues and filtered D-modules",
    journal = "Mathematische Annalen",
    volume = "354",
    number = "2",
    pages = "727--763",
    year = "2012",
    DOI = "http://dx.doi.org/10.1007/s00208-011-0746-0",
    keywords = "D-modules, Hodge modules, perverse sheaves"
}

@article{Ser55,
    author = "J-P. Serre",
    title = "Faisceaux alg\'ebriques coherents",
    journal = "The Annals of Mathematics",
    volume = "61",
    number = "2",
    pages = "197--278",
    year = "1955",
    keywords = "Serre"
}

@Book{HTT08,
title = {D-Modules, Perverse Sheaves, and Representation Theory},
author = {Hotta, R. and Takeuchi, K. and Toshiyuki, T.},
isbn = {978-0-8176-4363-8},
series = {Progress in Mathematics},
year = {2008},
publisher = {Birkh\"auser Boston},
keywords = "D-modules, perverse sheaves, representation theory, algebraic geometry, algebra, group theory, Lie groups"
}

@Book{Voi03II,
title = {Hodge Theory and Complex Algebraic Geometry II},
author = {Claire Voisin},
isbn = {9780511615177},
series = {Cambridge Studies in Advanced Mathematics},
year = {2003},
publisher = {Cambridge University Press},
keywords = "algebraic geometry, Hodge theory"
}

@incollection {Sch19,
    AUTHOR = {Schnell, Christian},
     TITLE = {An overview of {M}orihiko {S}aito's theory of mixed {H}odge
              modules},
 BOOKTITLE = {Representation theory, automorphic forms \& complex geometry},
     PAGES = {27--80},
 PUBLISHER = {Int. Press, Somerville, MA},
      YEAR = {[2019] \copyright 2019},
   MRCLASS = {32S35 (14C30 14F10)},
  MRNUMBER = {4337393},
}
\end{document}